\numberwithin{equation}{section}
\newtheorem{theorem}{Theorem}[section]
\newtheorem{lemma}[theorem]{Lemma}
\newtheorem{corollary}[theorem]{Corollary}
\newtheorem{proposition}[theorem]{Proposition}
\newtheorem{question}[theorem]{Question}
\newtheorem{definition}[theorem]{Definition}
\newtheorem{remark}[theorem]{Remark}
\newtheorem{formula}[theorem]{Formula}
\newtheorem{exercise}[theorem]{Exercise}
\newcommand{\forestftwo}{
\begin{tikzpicture}[baseline=.4cm]
\draw (0,0)--(0,1);
\draw (0,2/3)--(1/3,1);
\draw (-1/3,0)--(-1/3,1);
\draw (1/3,0)--(2/3,1);
\draw (2/3,0)--(1,1);
\end{tikzpicture}
}
\newcommand{\xthirteen}{
\begin{tikzpicture}[baseline=.8cm]
\draw (0,0)--(0,-.5);
\draw (0,0)--(-2,2);
\draw (0,0)--(2,2);
\draw (-1.5,1.5)--(-1,2);
\draw (1,1)--(0,2);
\draw (.5,1.5)--(1,2);
\node at (1.2,.9) {$7$};
\node at (.3,1.4 ) {$3$};
\node at (-1.7,1.4) {$6$};
\node at (-.3,-.1) {$13$};
\end{tikzpicture}
}
\def\hpic #1 #2 {\mbox{$\begin{array}[c]{l} \epsfig{file=#1,height=#2}
\end{array}$}}
\def\vpic #1 #2 {\mbox{$\begin{array}[c]{l} \epsfig{file=#1,width=#2}
\end{array}$}}
\newcommand {\5}{\vskip 5pt}
\newcommand{\act}{\curvearrowright}
\newcommand{\cA}{\mathcal A}
\newcommand{\cAF}{\mathcal{AF}}
\DeclareMathOperator{\Ad}{Ad}
\newcommand{\bu}{\bullet}
\newcommand{\cB}{\mathcal B}
\newcommand{\C}{\mathbf C}
\newcommand{\cC}{\mathcal C}
\newcommand{\fC}{\mathfrak C}
\DeclareMathOperator{\Diff}{Diff}
\newcommand{\cF}{\mathcal F}
\newcommand{\fH}{\mathfrak H}
\newcommand{\scrH}{\mathscr H}
\DeclareMathOperator{\Hilb}{Hilb}
\DeclareMathOperator{\id}{id}
\newcommand{\cI}{\mathcal I}
\newcommand{\cJ}{\mathcal J}
\newcommand{\la}{\lambda}
\DeclareMathOperator{\Mor}{Mor}
\DeclareMathOperator{\Leb}{Leb}
\newcommand{\N}{\mathbf N}
\DeclareMathOperator{\Rot}{Rot}
\newcommand{\R}{\mathbf R}
\newcommand{\cR}{\mathcal R}
\newcommand{\bfS}{\mathbf S}
\newcommand{\cSF}{\mathcal{SF}}
\newcommand{\fT}{\mathfrak T}
\DeclareMathOperator{\target}{target}
\newcommand{\un}{\underline}
\newcommand{\varep}{\varepsilon}
\newcommand{\Z}{\mathbf Z}
\begin{document}
\title{Pythagorean representations of Thompson's groups.}
\author{Arnaud Brothier and Vaughan F. R. Jones}
\address{Arnaud Brothier\\ School of Mathematics and Statistics, University of New South Wales, Sydney NSW 2052, Australia}
\email{arnaud.brothier@gmail.com\endgraf
\url{https://sites.google.com/site/arnaudbrothier/}}
\address{Vaughan F. R. Jones\\ Vanderbilt University, Department of Mathematics, 1326 Stevenson Center Nashville, TN, 37240, USA}\email{vaughan.f.jones@vanderbilt.edu}
\thanks{A.B. was partially supported by the European Research Council Advanced Grant 669240 QUEST. 
V.J. is not supported by an NSF grant.}

\begin{abstract}
We introduce the Pythagorean C*-algebras and use the  category/functor method to construct  unitary representations of Thompson's groups from representations of them. We calculate several examples.
\end{abstract}
\maketitle

\section{Introduction}
Let $F$ and $T$ be the Thompson's groups as usual-see \cite{Cannon-Floyd-Parry96}, and $F_n$ and $T_n$ their $n$-ary versions for $n\geq 2$. (So that $F=F_2$, $T=T_2$.)
In \cite{Jones17-Thompson} an action of $F_n$ arose from a \emph{functor} from the category $\mathcal F_n$ of $n-ary$ planar forests, whose objects are natural numbers and whose morphisms are planar forests, to another category $\cC$.
Forests decorated with cyclic permutations of their leaves give categories $\cAF_n$ of affine $n$-ary planar forests for which functors from $\cAF_n$ give actions of $T_n$.

The representations studied in \cite{Jones16-Thompson} came from functors $\Phi$ to a tensor category $\cC$ with $$\Phi(n)=\otimes^n V$$ for some object $V\in \mathcal C$ and an element $R\in \Mor(V,V\otimes V)$ which generates the action of forests by letting
$$\Phi(f)=(\otimes^{i-1}\id)\otimes R\otimes (\otimes^{n-i}\id)$$
where $f$ is the binary planar forest with $n$ roots and $n+1$ leaves
and a tree connecting the $i$th root to two leaves.

A simpler situation (in some sense the ``classical'' version where the one just described is the ``quantum'' version) is where $\cC=\Hilb$ is the category of Hilbert spaces (with isometries as morphisms) and
$$\Phi(n)=\oplus^n \mathfrak H, \ \Phi(f)=(\oplus^{i-1}\id)\oplus R\oplus (\oplus^{n-i}\id),$$ 
where $\fH\in\Hilb, \ R\in\Mor(\fH,\fH\oplus\fH),$ and $f$ is as above.
Then $R$ is necessarily of the form $A\oplus B$ with 
$$|A|^2+|B|^2=\id, \text{ where } | A |:=\sqrt{A^*A}.$$
For the Thompson's groups $F_n$ and $T_n$ we would use an $R$ of the form  $\oplus_{i=1}^n A_i$ where $\sum_{i=1}^n |A_i|^2=\id$ so we call the representations on the direct limit Hilbert space $\scrH$ the \emph{Pythagorean} representations of the Thompson's groups and we define the corresponding universal C$^*$-algebra.

\begin{definition}\label{def:P_n}
Let $P_n$ be the universal $C^*$-algebra generated $A_1,A_2,\cdots, A_n$ subject to the relation $$A_1^*A_1+A_2^*A_2+\cdots + A_n^*A_n=\id.$$
\end{definition}

Note that there is indeed a universal C$^*$-norm defining $P_n$ since in any *-representation on Hilbert space the norms of all the $A_i$ are at most $1$. 
The C$^*$-algebra $P_n$  has many quotients including continuous functions on the projective space $\C P^{n-1}$, the Cuntz algebra $O_n$ and the full free group algebra for the free group on $n$ generators.

Cyclic groups act on direct sums of a fixed Hilbert space in a way compatible with $\Phi$ just defined so one also obtains unitary representations of $T_n$ from the same data.

In this paper, we will investigate these representations of $F$ and $T$ for some choices of $A$ and $B$.

\subsection*{Acknowledgement}
We are grateful to Anna Marie Bohmann, Georges Skandalis and Ruy Exel for valuable comments and discussions.
We thank the New Zealand Mathematics Research Institute for its generous support.

\section{Definitions}\label{sec:def}

A binary planar forest is the isotopy class of a disjoint union of binary trees embedded
in $\R^2$ all of whose roots lie on $(\R,0)$ and all of whose leaves lie on $(\R,1)$. The isotopies are supported
in the strip $(\R,[0,1])$. Binary planar forests form a category
in the obvious way with objects being $\N$ whose elements are identified with isotopy classes of sets of points on a line and whose morphisms are the forests which can be composed by stacking a forest in 
$(\R,[0,1])$ on top of another, lining up the leaves of the one on the bottom with 
the roots of the other by isotopy then rescaling the $y$  axis to return
to a forest in  $(\R,[0,1])$. The structure is of course actually combinatorial but it is very useful to think of it in the way we have described.

We will call this category $\cF$.

\begin{definition}\label{genforest} Fix $n\in \N$. For each $i=1,2,\cdots,n$ let $f_{i,n}$ (or simply $f_i$ if the context is clear) be the planar
binary forest with $n$ roots and $n+1$ leaves consisting of straight
lines joining $(k,0)$ to $(k,1)$ for $1\leq k\leq i-1$ and $(k,0)$ to 
$(k+1,1)$ for $i+1\leq k\leq n$, and a single binary tree with root
$(i,0)$ and leaves $(i,1)$ and $(i+1,1)$ thus:
$$f_{2,4}=\forestftwo.$$
\end{definition}

Note that any element of $\cF$ is in an essentially unique way a 
composition of morphisms $f_i$, the only relation being $\Phi(f_j)\Phi(f_i)=\Phi(f_i)\Phi(f_{j-1}) \mbox{   for  } i<j-1$.
Call $e$ the unique morphism in $\cF$ from $1$ to $1$. 
The set of morphisms from $1$ to any number $n$ in $\cF$ is the set of binary planar rooted trees $\fT$ and is a \underline{directed set} with $s\leq t$ if and only if there is $f\in \cF$ with $t=f\circ s$.

Given a functor $\Phi:\cF\rightarrow \cC$ where the objects of $\cC$ are sets, we define the direct system $S_\Phi$ which associates to each $t  \in \fT$ with $n$ leaves the set $\Phi(n)$. 
For each $s\leq t$ we need to give a morphism $\iota_s^t$. 
For this observe that there is a unique $f\in \cF$ for which $t=f\circ s$ so we may
define $\iota_s^t$ to be $\Phi(f)$. As in \cite{Jones16-Thompson} we consider the direct limit:
$$ \underset{\rightarrow} \lim S_\Phi=\{(t,x)| t\in  \fT, x\in \Phi(\target(t))\} / \sim$$ 
where $(t,x)\sim (s,y)$ if and only if there are $f,g\in\cF$ with $f\circ t=g \circ s$ and $\Phi(f)(x)=\Phi(g)(y)$.
Denote by $\frac{t}{x}$ 
 the equivalence class of $(t,x)$ inside this quotient. If the $\iota_s^t$ are all injections, as they will be in our 
 Pythagorean case, we may identify each $\Phi(n)$ with its image in the direct limit so that $(t,x)$ may 
 also be used to represent $\frac{t}{x}$.

The limit $ \underset{\rightarrow} \lim S_\Phi$ will inherit some structure from the category $\cC$. Our main interest here
 is in the category $\cC=\Hilb$ of Hilbert spaces with isometries for morphisms.
The direct limit $ \underset{\rightarrow} \lim S_\Phi$ will be a pre-Hilbert space which may be completed to a Hilbert space which we will also call the direct limit unless special care is required.
 
Note that this is a slight modification of the definition of \cite{Jones16-Thompson} where
$S_\Phi(t)$ was $\Mor(\Phi(e),\Phi(t))$. 
This was necessary in \cite{Jones16-Thompson} to make $\Phi(t)$ a \emph{set} since we were dealing with abstract tensor categories. 
Sometimes $\Mor(\Phi(e),\Phi(t)) $  and $\Phi(\target(t))$ can be naturally identified in which case the definition is the same as in \cite{Jones16-Thompson}. 
Such is the case for the identity functor from $\cF$ to itself or when the target category is the rectangular category of an irreducible planar algebra.

As was observed in \cite{Jones16-Thompson}, $\cF$ has the required properties so that $ \underset{\rightarrow} \lim S_\Phi$ is, when $\Phi$ is the "identity" functor (taking an object $n$ of $\cF$ to $\Mor(1,n)$ and morphisms to composition in the obvious way), the Thompson's group $F$, which is thus the \emph{group of fractions} of $\cF$, see \cite{Cannon-Floyd-Parry96} or in a  language closer to ours,\cite[Section 7.2]{Belk04}.
The element $a\in \Mor(1,\target(b))$ is written $\displaystyle \frac{b}{a}$.

Moreover, for any other functor $\Phi$, $ \underset{\rightarrow} \lim S_\Phi$ carries a natural action of $F$ defined as follows:
$$\frac{t}{s}(\frac{s}{x})=\frac{t}{x}$$ where $s,t\in \fT$ with $\target(s)=\target(t)=n$ and $x\in \Phi(n)$.  
A Thompson group element given as a pair of trees with $m$ leaves and an element of $ \underset{\rightarrow} \lim S_\Phi$ given as a pair (tree with $n$ leaves, element of $\Phi(n)$) may appear to not be composable by the above formula, but they can always be ``stabilised'' to be so within their equivalence classes. 
 
The Thompson group action preserves the structure of $ \underset{\rightarrow} \lim S_\Phi$ so for instance in the Hilbert space case the representations are unitary.

For the reader who wants to follow this paper in detail, we propose the following exercise. 
Let $\cC$ be the category of sets, $\sigma$ be a set with one element $x$ and $\Phi$ be the functor from $\cF$ to $\cC$ defined by:
\begin{enumerate}
\item $\Phi(n)=\coprod_1^n \sigma$=$\{x_1,x_2,\cdots,x_n\}$.\\
\item For each $n$ and $i=1,2,\cdots n$, 
$\Phi(f_i)(x_j)= \begin{cases} x_j &\mbox{  if } j< i \\ 
x_{j+1} & \mbox{  if } j\geq i\end{cases}.$
\end{enumerate}
(recall that the $f_i$ are the generators of $\mathcal F$ as in definition \ref{genforest}.)

\begin{exercise} With $\Phi$ as above, calculate the stabiliser of $(t,x_i)\in 
 \underset{\rightarrow} \lim S_\Phi$ for a tree $t$ with $n$ 
 leaves and an element $x_i$ of $\coprod_1^n \sigma$. (Note that an element of 
 $\Mor(\Phi(1),\Phi(n))$ is the same thing as an element of
$ \coprod_1^n \sigma$.)
\end{exercise}

We now define the Pythagorean representations.

\begin{definition}\label{pythagoras} Given a Hilbert space $\fH$ and a pair of bounded operators $(A,B)$ on $\fH$ satisfying the Pythagorean equation $A^*A+B^*B=\id$ we define the functor $\Phi=\Phi_{A,B}:\cF\to\Hilb$ as follows:
\begin{enumerate}
\item $\Phi(n)=\oplus_1^n \fH.$\\
\item For each $n$ and $i=1,2,\cdots n$, \\
$\Phi(f_{i,n})(\oplus_{j=1}^n \xi_j)= \oplus_{j=1}^{n+1} \eta_j\mbox{  with  } 
\eta_j=\begin{cases} \xi_j &\mbox{  if } j< i \\ 
A(\xi_{i}) & \mbox{  if } j= i\\
B({\xi_{i}}) & \mbox{ if } j=i+1\\
\xi_{j-1} &\mbox{ if }j>{i+1}\end{cases}.$
\end{enumerate}
\end{definition}

\begin{proposition} 
With notation as in definition \ref{pythagoras},
$$\Phi(f_j)\Phi(f_i)=\Phi(f_i)\Phi(f_{j-1}) \mbox{   for  } i<j-1$$
and the $\Phi(f_i)$ are isometries so $\Phi$ extends to a functor
from $\mathcal F$ to $\Hilb$.
\end{proposition}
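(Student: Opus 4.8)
The plan is to verify both assertions by direct computation on a generic vector $\oplus_{k=1}^n \xi_k \in \Phi(n)=\oplus_1^n\fH$. For the isometry claim I would fix $i$ and $n$ and compute the squared norm of $\Phi(f_{i,n})(\oplus_{k=1}^n \xi_k)$. By Definition \ref{pythagoras} this image is $\oplus_{k=1}^{n+1}\eta_k$ with $\eta_k=\xi_k$ for $k<i$, $\eta_i=A\xi_i$, $\eta_{i+1}=B\xi_i$, and $\eta_k=\xi_{k-1}$ for $k>i+1$, so its squared norm is
$$\sum_{k<i}\|\xi_k\|^2 + \|A\xi_i\|^2 + \|B\xi_i\|^2 + \sum_{k>i+1}\|\xi_{k-1}\|^2.$$
The only interesting term is the middle pair: $\|A\xi_i\|^2+\|B\xi_i\|^2=\langle (A^*A+B^*B)\xi_i,\xi_i\rangle=\|\xi_i\|^2$ by the Pythagorean relation $A^*A+B^*B=\id$. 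Re-indexing the last sum by $k\mapsto k-1$ then collapses everything to $\sum_{k=1}^n\|\xi_k\|^2$, which is exactly the squared norm of the input; hence $\Phi(f_{i,n})$ is an isometry.

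For the relation I would first restore the domain subscripts suppressed in the statement: since $f_{i,n}$ has $n$ roots and $n+1$ leaves, the two composites are only defined as $\Phi(f_{j,n+1})\Phi(f_{i,n})$ and $\Phi(f_{i,n+1})\Phi(f_{j-1,n})$, both morphisms $\Phi(n)\to\Phi(n+2)$. I would then apply each composite to $\oplus_{k=1}^n\xi_k$ and read off the resulting $n+2$ coordinates. Applying $\Phi(f_{i,n})$ first splits coordinate $i$ into $A\xi_i,B\xi_i$; because $i<j-1$, the second map $\Phi(f_{j,n+1})$ acts at a coordinate lying strictly above this split, where the incoming entry is an untouched $\xi_{j-1}$, so it produces $A\xi_{j-1},B\xi_{j-1}$. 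Carrying out the symmetric calculation for the other composite (split $j-1$ first, then $i$) yields the same list of coordinates.

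The computation itself presents no real difficulty; the only thing requiring care --- and the one place an error is easy to make --- is the index bookkeeping, since each application of a generator shifts all coordinates above the split point up by one. The hypothesis $i<j-1$ (equivalently $i+1<j$) is exactly what guarantees the two split points never collide after the shifts: the first split occupies slots $i,i+1$ and the second occupies slots $j,j+1$ with $i+1<j$, leaving an unambiguous block of untouched entries between them. I would organize the comparison as a short case analysis over the ranges $k<i$, $k=i$, $k=i+1$, $i+1<k<j$, $k=j$, $k=j+1$, $k>j+1$, checking that in each range both composites return the same entry ($\xi_k$, $A\xi_i$, $B\xi_i$, $\xi_{k-1}$, $A\xi_{j-1}$, $B\xi_{j-1}$, $\xi_{k-2}$ respectively). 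Since this is precisely the defining relation of $\cF$, verifying it on the generators $f_i$ together with the isometry property is exactly what is needed for $\Phi$ to extend to a functor from $\cF$ to $\Hilb$.
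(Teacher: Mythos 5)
Your proof is correct. The paper states this proposition without any proof, treating it as a routine verification; your coordinate-by-coordinate check --- the Pythagorean identity $A^*A+B^*B=\id$ giving $\|A\xi_i\|^2+\|B\xi_i\|^2=\|\xi_i\|^2$ for the isometry claim, and the hypothesis $i<j-1$ keeping the two splits in disjoint slots for the commutation relation --- is exactly the computation being taken for granted, and your seven-case table of entries ($\xi_k$, $A\xi_i$, $B\xi_i$, $\xi_{k-1}$, $A\xi_{j-1}$, $B\xi_{j-1}$, $\xi_{k-2}$) is what both composites indeed produce.
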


\begin{definition} 
With $\fH, A,B$ and $\Phi$ as above we call the unitary representation $\pi=\pi_{A,B}$ of $F$ on the direct limit $\scrH$ the \emph{Pythagorean representation} given by $(A,B)$.
Elements of $\scrH$ are written $\frac{t}{\xi}$. 
We identify $\fH:=\Phi(1)$ and $\fH_t:=\{\frac{t}{\eta}:\eta\in\Phi(\target(t))\}$ as subspaces of $\scrH$.
 
More generally if $A_i$ satisfy $\sum_{i=1}^n A_i^*A_i=\id$ we call the representation $\pi_{A_1,\cdots, A_n}$  of $F_n$ (using the $A_i$ to represent the category of planar $n-ary$ forests) the \emph{Pythagorean representation} given by $A_1,A_2,\cdots A_n$.
\end{definition}

We will sometime write $p\bu q$ as the forest obtained by concatenating horizontally a forest $p$ to the left of a forest $q$.
For example, if $p$ is the forest with $m$ straight lines, then $p\bu f_{i,n}=f_{i+m,n+m}$ and $f_{i,n}\bu p = f_{i,n+m}$.

\section{The coefficients}\label{sec:coefficients}
The direct limit may be quite tricky to determine explicitly. 
A useful point of access will be the \emph{coefficients} of the representation $(\pi,\scrH)$, i.e.~
functions on the group of the from $\langle \pi(g)\xi,\eta\rangle$ where $\xi,\eta\in\scrH$.

\begin{definition} 
Suppose we are given $A$ and $B$ acting on $\fH$ with $A^*A+B^*B=\id$. 
Choose a unit vector $\Omega\in \fH$ and call its image in the direct limit $\scrH$ the \emph{vacuum}, also denoted $\Omega$. (Written in full it would be the class in the direct limit of $(e,\Phi(\id)(\Omega))$.)
\end{definition}

Let us calculate the coefficient $\langle\pi(g)\Omega, \Omega\rangle$ (which determines the Pythagorean representation of $F$ on the linear span of the $\pi(h)\Omega$ as $h$ varies in $F$).

 To this end observe first that each leaf $\ell$ of a binary planar rooted tree $t$ with $n$ leaves is indexed by a sequence $(e_1,e_2,\cdots, e_k)$ of $0$'s and $1$'s according to whether the branch to the leaf turns left ($0$) or right ($1$) at the $i$th vertex from the root.
For $1\leq i\leq k$  let  
$$X_i^{\ell}=\begin{cases} A&\mbox{  if } e_i=0\\ 
B & \mbox{  if } e_i=1 \end{cases}$$

\begin{definition} \label{leafoperator}
With notation as above set $$\mathcal A_{\ell}^t=X_k^{\ell}X_{k-1}^{\ell}\cdots X_1^{\ell}$$
We will freely identify a tree $t$ with its set of leaves $\{\ell\}$ with a standard dyadic partition $\cI=\cI_t$ with intervals $\{I\}.$
Note that the operator $\cA^t_\ell$ only depends on the corresponding interval $I$ and not on the partition $\cI_t$ as there is a unique path to any standard dyadic interval inside the infinite full binary planar tree, see \cite{Cannon-Floyd-Parry96} for details.
We will often write $\cA_I$ instead of $\cA^t_\ell$.
\end{definition}
\5
By definition, $\Omega$ is the (class of the) pair $(t,\underset{\ell}\oplus v^t_{\ell})$ with $v^t_{\ell}=\cA_{\ell}^t\Omega$.
Thus, if the pair of trees $\displaystyle \frac{t}{s}$ is an element of $F$, then each leaf $\ell$ of $t$ is identified with a leaf of $s$, which we also call $\ell$, and  $(t,\underset{\ell}\oplus v^s_{\ell})$ represents $\pi(g)(\Omega)$. 
So $$\langle \pi(g)\Omega,\Omega\rangle= \sum_{ \ell \mbox{  a leaf of  }t }\langle \cA_\ell^s \Omega,\cA_\ell^t\Omega\rangle.$$
 
 \begin{definition}\label{pathoperator}
 For $\displaystyle \frac{t}{s}$ and $\ell$ as above set $$\cB^{t,s}_\ell=(\cA_\ell^t)^*\cA_\ell^s \text{ or simply } \cB_\ell=(\cA_\ell^t)^*\cA_\ell^s.$$
 \end{definition}
 
 We obtain the following:
 \begin{formula}\label{formula} $$\langle \pi(g)\Omega,\Omega\rangle= \sum_{\ell \mbox{ a leaf of } t}\langle\cB^{t,s}_\ell \Omega,\Omega\rangle.$$
 \end{formula}
 
 The following example should make this formula clear: \\
 Let $g\in F$ be given by the pair of trees below:
 
 \mbox{   \qquad} $\displaystyle \frac{t}{s}=  $\hspace {0.1in}  \vpic {omega} {1in}

 Then $$\langle \pi(g)\Omega,\Omega\rangle= \langle 
  (A^*AA+B^*A^*A^*BA+B^*A^*B^*AB+B^*B^*BB)\Omega , 
  \Omega\rangle $$
  
  Verbally one could express the formula as follows:\\
  `` Arrange the pair of trees one on top of the other. Label the edges of the bottom tree by $A$ or $B$ according to whether they are left or right edges, and similarly the top tree by $A^*$ or $B^*$. For each path on the pair of trees from top to bottom form the operator given by the product of the operators on each edge, take its $\Omega-\Omega$ coefficient  and sum.''

\section{Thompson's group $T$}
\subsection{A Pythagorean representation extends to larger groups}\mbox{  }

We briefly recall how to get $T$ in the category picture. 
See \cite{Jones16-Thompson} or \cite{Graham-Lehrer98-TLJ} for more details.
The objects of the category of affine binary planar forests $\cAF$ are the same as those of $\cF$ i.e.~sets of $n$ points on a line up to isotopy, identified with $\N$. 
A morphism of $\cAF$ is a pair $(f,k)$ where $f$ is a binary planar forest with $m$ roots and $n$ leaves, on two parallel lines, up to isotopy, and $k$ is an element of $\Z/n\Z$ represented by an integer between $0$ and $n-1$.
Morphisms $(f,k)\in\cAF(m,n)$ and $(g,\ell)\in\cAF(n,p)$ are composed as follows.
First form the planar rooted forest $g\circ_k f$ with $m$ roots and $p$ leaves by attaching the leaves of $f$ to the roots of $g$ in cyclic order starting by attaching the $(k+1)$th leaf of $f$ to the first root of $g$. 
Roots and leaves are counted from left to right starting from $1$ on the left. 
Thus the $(n+1-k)$th root of $g$ is attached to the first leaf of $f$ and so on.  
The composition of the two morphisms is then $(g\circ_k f,(k'+\ell) \mod p)$ where $k'$ is equal to  the sum of the number of leaves of the $k$ last trees of $g$.

We see that $\cAF(n,n)$ is isomorphic to the cyclic group $\Z/n\Z$ and that the category $\cAF$ satisfies the conditions to have a group of fractions which will in this case be pairs of trees with the same number of vertices, each with a distinguished leaf, up to cancelling 
carets and a common cyclic group action. Stabilising by a morphism from $n$ to $n$, we see that the distinguished leaf of the first tree may be supposed to be the leftmost. 
We  thus obtain the usual "pair of trees" picture of $T$ as in \cite{Cannon-Floyd-Parry96}.

\begin{remark} 
If we consider $T$ as acting on the circle $\bfS$ and that $t=s$ are equal to the full binary tree with $2^n$ leaves, then the fraction obtained from the pair $((t,k) , (t,0))$ acts as the rotation of angle $k 2^{-n}.$
\end{remark}

A Pythagorean pair $(A,B)$ on $\fH$ gives a representation of $T$ in the same way as we did for $F$. 
Just use $(A,B)$ to go between direct sums of $\fH$ according to the forest and distinguished leaf.

It is important to note that the Hilbert space $\scrH$ obtained from $\cAF$ is the \emph{same} one as that obtained from $\cF$ since all the  vector spaces corresponding to a fixed tree are identified via rotations, and the category $\cF$ is contained in $\cAF$ by making the leftmost leaf of 
a forest the distinguished one. 
It is also clear that the restriction of the representation of $T$ to $F<T$ is precisely the representation we have constructed for $F$.

Thus we will use the same notations $\pi$ and $\rho$ (below) for the representations of $T$ and $F$.

One may calculate the coefficients for Thompson's group $T$ in the same way and they are given by the same formula. 
The only difference is that the identification of the leaves of the pairs of trees may be dislocated so that the leftmost leaf of the bottom tree may be identified with any leaf of the top tree. 
For instance the element  $g\in T$ which rotates by $\pi/2$  is specified by the following:
 \vskip 5pt
 \mbox{   \qquad}\hspace {1in}  \vpic {omegaT} {1.8in}

(Note that the rightmost and leftmost edges are identified.)
We see that 
$$\langle \pi(g)\Omega,\Omega\rangle= \langle  (A^*A^*BA + A^*B^*AB+B^*A^*BB+B^*B^*AA) \Omega , \Omega \rangle. $$

\begin{remark}\label{rem:V}
Thompson's group $V$ can be defined in a similar way in the category picture.
Let $\cSF$ be the category of symmetric forests with set of objects $\N$ and morphisms $\cSF(n,m)=\cF(n,m)\times S_m$ that is forests times the symmetric group of $m$ elements, $n,m\geq 1$.
Composition of morphisms is performed just like it was  for $\cAF$ and the resulting group of fractions is isomorphic to Thompson's group $V$.
If $\pi$ is a Pythagorean representation of $F$, then we can extend it to a representation of $V$ with the following formula:
$$\pi \left( \frac{ ( t , \tau ) }{ ( s , \sigma ) } \right) ( \frac{ s }{ \oplus_\ell \xi_\ell } ) = \frac{ s }{ \oplus_\ell \xi_{ \sigma^{-1}\tau( \ell ) } } .$$ 
One can define similarly $V_n$ and extend Pythagorean representations in the $n$-ary version.
We will restrict our study to representations of $F$ and $T$ but it is remarkable that any Pythagorean representation extends to $V$ and further interesting questions can be raised in this context.
\end{remark}

\subsection{The rotation}\label{sec:rotation}\mbox{ }

As we mentioned before if $t_n$ is the full binary tree with $2^n$ leaves, then Thompson's group $T$ contains the rotation $r_n=\frac{ ( t_n , 1) }{ ( t_n , 0 )}$ by an angle $2^{-n}$ when it acts on the circle $\bfS\simeq \R/\Z$.
Hence, if $\xi = \displaystyle \frac{ t_n} { (\xi_1 , \cdots , \xi_{ 2^n }) } \in \scrH$, then $\pi(r_n)\xi =\displaystyle \frac{ t_n}{ (\xi_2 , \xi_3,\cdots , \xi_{2^n} , \xi_1)}$.
We are interested in knowing if $\pi(r_n)$ has a weak operator limit when $n$ tends to infinity.
We start by observing that the first component of the inner product $\langle\pi(r_n)\xi,\eta\rangle$ for $\xi,\eta\in \fH_{t_n}$ tends to zero at infinity.

\begin{lemma}\label{lem:weak-limit}
The sequence of operators $(B^n)^*A^n$ tends to zero for the weak operator topology of $B(\fH)$.
\end{lemma}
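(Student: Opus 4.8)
My plan is to prove the equivalent statement that $\langle (B^n)^*A^n\xi,\eta\rangle=\langle A^n\xi,B^n\eta\rangle\to 0$ for every $\xi,\eta\in\fH$, which is exactly convergence to $0$ in the weak operator topology. First I would record that $\|A\|,\|B\|\le 1$ (as already noted after Definition \ref{def:P_n}), so that $\|A^n\xi\|$ and $\|B^n\eta\|$ are non-increasing, hence convergent. The reason this is the delicate point: the naive estimate $|\langle A^n\xi,B^n\eta\rangle|\le\|A^n\xi\|\,\|B^n\eta\|$ is useless, because in general \emph{neither} factor tends to $0$ (for instance when $A$ has an isometric part, $\|A^n\xi\|$ may be bounded below). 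So the proof cannot rest on decay of either vector and must instead extract a genuine cancellation. Identifying where that cancellation comes from is the main obstacle.

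The engine I would use is the Pythagorean relation rewritten as $\id-A^*A=B^*B$. Applying it to the vector $A^n\xi$ yields the telescoping identity
$$\|BA^n\xi\|^2=\langle(\id-A^*A)A^n\xi,A^n\xi\rangle=\|A^n\xi\|^2-\|A^{n+1}\xi\|^2,$$
whose right-hand side tends to $0$ since $\|A^n\xi\|$ converges. Hence $BA^n\xi\to 0$ in norm, and by the symmetric computation using $\id-B^*B=A^*A$ we also get $AB^n\eta\to 0$. These two \emph{strong} limits, which are not visible from norm bounds alone, are precisely the ingredients the cancellation will need.

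To finish I would insert a copy of $\id=A^*A+B^*B$ into the left-hand slot of the inner product and split:
$$\langle A^n\xi,B^n\eta\rangle=\langle A^*A^{n+1}\xi,B^n\eta\rangle+\langle B^*(BA^n\xi),B^n\eta\rangle=\langle A^{n+1}\xi,AB^n\eta\rangle+\langle BA^n\xi,B^{n+1}\eta\rangle.$$
Each term is now governed by one of the strong limits above: the first is bounded by $\|A^{n+1}\xi\|\,\|AB^n\eta\|\le\|\xi\|\,\|AB^n\eta\|$ and the second by $\|BA^n\xi\|\,\|B^{n+1}\eta\|\le\|BA^n\xi\|\,\|\eta\|$, both tending to $0$. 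This gives $\langle(B^n)^*A^n\xi,\eta\rangle\to 0$ for all $\xi,\eta$, which is the claim. I do not expect any serious technical difficulty beyond spotting the insertion of $\id=A^*A+B^*B$; the only routine care needed is to use $\|A^{n+1}\xi\|\le\|\xi\|$ and $\|B^{n+1}\eta\|\le\|\eta\|$ to keep the surviving factors bounded.
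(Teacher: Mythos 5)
Your proof is correct, and it takes a genuinely different route from the paper's. The paper introduces the spectral projections $p_A=\chi_{[2/3,1]}(A^*A)$ and $p_B=\chi_{[2/3,1]}(B^*B)$, proves by a quantitative descent argument (extracting a subsequence along which $\Vert p_A^\perp A^{n_k}\xi\Vert$ would stay bounded below and deriving $\Vert A^{n_{k+1}}\xi\Vert^2\leq 1-kC/3$) that $p_A^\perp A^n\to 0$ and $p_B^\perp B^n\to 0$ strongly, and then concludes from $p_Ap_B=0$, which the Pythagorean relation forces, that $\langle A^n\xi,B^n\eta\rangle$ is within $\varepsilon$ of $\langle p_Bp_AA^n\xi,B^n\eta\rangle=0$. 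You replace all of this with the telescoping identity $\Vert BA^n\xi\Vert^2=\langle(\id-A^*A)A^n\xi,A^n\xi\rangle=\Vert A^n\xi\Vert^2-\Vert A^{n+1}\xi\Vert^2\to 0$ (and its mirror $AB^n\eta\to 0$), followed by inserting $\id=A^*A+B^*B$ to split $\langle A^n\xi,B^n\eta\rangle$ into two terms, each killed by one of these strong limits; every step checks out. Your version is shorter, avoids the functional calculus entirely, and in fact subsumes the paper's intermediate claim: since $B^*B\geq\tfrac13\,p_A^\perp$ (the spectrum of $B^*B=\id-A^*A$ on the range of $p_A^\perp$ lies in $(1/3,1]$), the convergence $BA^n\xi\to 0$ already yields $p_A^\perp A^n\xi\to 0$. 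What the paper's formulation buys is the more geometric picture that the orbits $A^n\xi$ and $B^n\eta$ concentrate asymptotically on the two orthogonal spectral subspaces $p_A\fH$ and $p_B\fH$, which locates the cancellation structurally rather than algebraically.
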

\begin{proof}
Consider a Pythagorean couple $A,B$ and define the spectral projections $p_A:=\chi_{[2/3,1]}(A^*A)$ and $p_B:=\chi_{[2/3,1]}(B^*B)$ associated to the interval $[2/3,1]$.
Note that $A^*A$ and $B^*B$ are positive operators smaller than the identity.
Therefore, the projection $p_A^\perp:=\id-p_A$ is the spectral projection of $A^*A$ associated to the interval $[0,2/3).$

We claim that the sequence $(p_A^\perp A^n)_n$ tends to zero for the strong operator topology.
Consider a vector $\zeta\in \fH$ and observe that 
\begin{align}\nonumber
\Vert A\zeta\Vert^2 & = \langle A\zeta,A\zeta\rangle = \langle A^*A\zeta,\zeta\rangle = \langle p_A A^*A\zeta,\zeta\rangle + \langle p_A^\perp A^*A\zeta,\zeta\rangle\\ \nonumber
& = \langle  A^*A p_A\zeta,p_A\zeta\rangle + \langle  A^*Ap_A^\perp\zeta,p_A^\perp\zeta\rangle \text{ since $A^*A$ and $p_A$ commute} \\ \label{equa:PROJ}
& \leq \Vert p_A\zeta\Vert^2 + \frac{2}{3} \Vert p_A^\perp\zeta\Vert^2 
 = \Vert \zeta\Vert^2 - \frac{1}{3}\Vert p_A^\perp\zeta\Vert^2. 
\end{align}
Fix a vector $\xi\in \fH$ and observe that $(\Vert A^n\xi\Vert)_n$ is a decreasing sequence since $\Vert A\Vert\leq 1.$
Suppose that $\Vert p_A^\perp A^n\xi\Vert$ does not tend to zero.
Then there exists $C>0$ and a strictly increasing sequence $(n_k)_k$ such that $\Vert p_A^\perp A^{n_k}\xi\Vert\geq \sqrt C$ for any $k$.
We obtain that 
$$\Vert A^{n_{k+1}}\xi\Vert^2  \leq \Vert A^{n_k + 1} \xi\Vert^2 = \Vert A(A^{n_k}\xi)\Vert^2  \leq \Vert A^{n_k}\xi \Vert^2 - C/3 \text{ by } \eqref{equa:PROJ}. $$
By iterating the process we get $\Vert A^{n_{k+1}}\xi\Vert^2\leq 1-kC/3$, a contradiction since eventually $1-kC/3$ is negative.
This proves the claim.

The Pythagorean equation implies that $p_A$ and $p_B$ commute and $p_Ap_B=0$.
Choose some unit vectors $\xi,\eta\in \fH$ and a real number $\varepsilon>0.$ 
By the claim there exists $N$ satisfying
$$ \Vert A^n \xi - p_A A^n \xi \Vert +  \Vert B^n \eta - p_B B^n \eta \Vert <\varepsilon, \forall n\geq N.$$
We obtain for all $n\geq N$:
\begin{align*}
\vert \langle (B^*)^n A^n \xi,\eta\rangle\vert & = \vert  \langle A^n \xi , B^n \eta\rangle\vert \leq \vert\langle  p_AA^n \xi , p_B B^n \eta\rangle\vert + \varepsilon\\
& = \vert \langle  p_B p_AA^n \xi , B^n \eta\rangle\vert + \varepsilon = \varepsilon.
\end{align*}
This proves the lemma.
\end{proof}

We now  give an easy criterion for proving the weak convergence of the rotations.
Consider $x\in B(\fH)\cap \{A,B\}'$ an operator acting on $\fH$ which commutes with $A$ and $B$.
For any tree $t$ with $n$ leaves we define $x_t\in B(\fH_t)$ by $x_t(t,\xi_1,\cdots,\xi_n):= (t,x\xi_1,\cdots,x\xi_n)$.
Note that since $x$ commutes with $A$ and $B$ we have that $\Phi(f)\circ x_t = x_{ft} \circ \Phi(f)$ for any forest $f$ with $n$ roots.
This implies that $(x_t)_t$ densely defines a map $[x]$ on the direct limit $\scrH$ and since
$\Vert x_t\Vert=\Vert x\Vert$ for any $t$ we have that $[x]$ is a bounded operator.

\begin{proposition}
Consider a Pythagorean couple $(A,B)$ acting on $\fH$ and its associated representation $(\pi,\scrH)$.
Assume that there exists $x\in B(\fH)$ which commutes with $A,B$ such that  $\langle \pi(r_n)\xi,\eta\rangle$ converges to $\langle x\xi,\eta\rangle$ for any $\xi,\eta\in \fH\subseteq \scrH$.
Then  $\pi(r_n)$ converges to $[x]$ in $B(\scrH)$ for the weak operator topology.
\end{proposition}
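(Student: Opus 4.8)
The plan is to upgrade the hypothesised convergence of matrix coefficients on the single copy $\fH=\fH_e\subseteq\scrH$ to weak convergence on all of $\scrH$, exploiting the self-similarity of the rotations together with Lemma \ref{lem:weak-limit}. Since each $\pi(r_n)$ is unitary and $[x]$ is bounded, the sequence $(\pi(r_n))_n$ is uniformly bounded, so by a standard approximation argument it suffices to prove $\langle\pi(r_n)\zeta,\chi\rangle\to\langle[x]\zeta,\chi\rangle$ for $\zeta,\chi$ ranging over a dense subspace. As the full binary trees $t_m$ are cofinal in $\fT$, the union $\bigcup_m\fH_{t_m}$ is dense in $\scrH$, and by bilinearity we may fix $m$ and take $\zeta=\oplus_j\zeta_j$ and $\chi=\oplus_j\chi_j$ in $\fH_{t_m}=\oplus_{j=1}^{2^m}\fH$, the summands being indexed by the leaves $I_j$ of $t_m$.

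First I would fix $n>m$, set $p:=n-m$, and push $\zeta,\chi$ up to level $n$: each leaf $I_j$ splits into the $2^p$ standard dyadic subintervals $K$, and the component of $\zeta$ at $K$ is the product of the $p$ operators $A$ or $B$ read off along the path descending from $I_j$ to $K$, applied to $\zeta_j$ (similarly for $\chi$). Using the explicit formula $\pi(r_n)\frac{t_n}{(\xi_1,\dots,\xi_{2^n})}=\frac{t_n}{(\xi_2,\dots,\xi_{2^n},\xi_1)}$ from Section \ref{sec:rotation}, the inner product $\langle\pi(r_n)\zeta,\chi\rangle$ becomes a sum over the $2^n$ leaves $K$ of $t_n$ of the pairing between the component of $\zeta$ at the successor leaf $\sigma(K):=K+2^{-n}$ and the component of $\chi$ at $K$. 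I would split this sum into \emph{bulk} terms, where $K$ and $\sigma(K)$ lie in the same $I_j$, and the $2^m$ \emph{boundary} terms, where $K$ is the rightmost subinterval of some $I_j$ and $\sigma(K)$ is the leftmost subinterval of $I_{j+1}$ (cyclically).

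The boundary terms are exactly where Lemma \ref{lem:weak-limit} does its work: for such $K$ the relevant operators are $B^p$ (all right turns into $I_j$) and $A^p$ (all left turns into $I_{j+1}$), so the term equals $\langle(B^{p})^*A^{p}\zeta_{j+1},\chi_j\rangle$, which tends to $0$ as $n\to\infty$; since there are only $2^m$ of them, their total contribution vanishes in the limit. For the bulk terms I would invoke the crucial self-similarity: the bulk terms supported inside a single $I_j$ are precisely the pairings $\langle(\cA_{K'})^*\cA_{K'+2^{-p}}\zeta_j,\chi_j\rangle$ over all level-$p$ dyadic intervals $K'$ of $[0,1]$ except the last one, which is exactly the expansion of $\langle\pi(r_{p})\zeta_j,\chi_j\rangle$ with its single wraparound term $\langle(B^{p})^*A^{p}\zeta_j,\chi_j\rangle$ removed (here one uses that $\cA_I$ depends only on the interval $I$, not on the ambient partition, as noted in Definition \ref{leafoperator}). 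Hence the bulk contribution from $I_j$ equals $\langle\pi(r_{n-m})\zeta_j,\chi_j\rangle-\langle(B^{n-m})^*A^{n-m}\zeta_j,\chi_j\rangle$, which converges to $\langle x\zeta_j,\chi_j\rangle$: the first summand by the hypothesis applied at level $n-m\to\infty$, and the second by Lemma \ref{lem:weak-limit}. Summing over $j$ and using that $[x]$ acts as $x$ on each leaf component yields $\langle[x]\zeta,\chi\rangle=\sum_j\langle x\zeta_j,\chi_j\rangle$, which completes the coefficient convergence on $\fH_{t_m}$ and hence the proposition.

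The routine parts are the density and uniform-boundedness reduction and the bookkeeping of pushing vectors up to level $n$. The main obstacle, and really the heart of the argument, is recognising that the bulk portion of the level-$n$ rotation coefficient, restricted to one leaf $I_j$, reproduces up to a single wraparound correction the level-$(n-m)$ rotation coefficient on $\fH$; this is what lets the hypothesis be fed back in, and it rests squarely on the fact that the leaf operators $\cA_I$ depend only on the dyadic interval $I$.
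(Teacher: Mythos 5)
Your proposal is correct and follows essentially the same route as the paper: reduce by density and uniform boundedness to $\xi,\eta\in\fH_{t_m}$, expand $\langle\pi(r_n)\xi,\eta\rangle$ at level $n=m+l$, recognise the within-leaf ("bulk") part as $\langle\pi(r_l)\xi_j,\eta_j\rangle$ minus a wraparound term, and kill both the wraparound and the cross-leaf boundary terms with Lemma \ref{lem:weak-limit} while the hypothesis handles the main term. The only cosmetic difference is that the paper merges your two correction terms into the single expression $\langle (B^{l})^*A^{l}\xi_i,\eta_{i-1}-\eta_i\rangle$.
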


\begin{proof}
Following notations of Section \ref{sec:coefficients}, if $I$ is a standard dyadic interval, we denote by $\cA_I$ the operator equal to a product of $A$ and $B$ such that $\cA_{[0,1]}=\id, \cA_{[0,1/2]}=A, \cA_{[1/2,1]}=B, \cA_{[1,1/4]}=AA, \cA_{[1/4,1/2]}= BA, \cA_{[1/2,3/4]}=AB,$ etc.
Put $I_{m,k}:=[ \frac{k-1}{2^m} , \frac{ k }{2^m} ]$ and $\cA_{m,k}:= \cA_{I_{m,k}}$ for $m\geq 1$ and $1\leq k\leq 2^m$.
Fix some vectors $\xi,\eta\in \scrH$.
Let us show that $\lim_n\langle \pi(r_n)\xi,\eta\rangle=\langle [x]\xi,\eta\rangle$.
Consider the Hilbert spaces $\fH_{t_m},m\geq 1$ associated to the full binary tree $t_m$ with $2^m$ leaves and observe that they form an increasing union of subspaces of $\scrH$ whose union is dense.
Since $\pi(r_n), n\geq 1$ are unitary operators and thus are uniformly bounded we can assume by density that $\xi,\eta\in\fH_{t_m}$ for a certain $m\geq 1$.
Identify $\fH_{t_m}$ with $\fH^{2^m}$ and denote by $\xi_i$ and $\eta_i$ there $i$th component with the convention that $\xi_{i+2^m} = \xi_i.$
Fix $l\geq 1$ and consider now $\xi,\eta$ inside $\fH_{t_{m+l}}$ via the usual embedding $\fH_{t_m}\subset \fH_{t_{m+l}}.$
The $il+k$th component of $\eta$ in $\fH_{t_{m+l}}$ is then $\cA_{l , k}\eta_i$ for $1\leq i\leq 2^m$ and $1\leq k\leq 2^k.$
Let us apply the rotation $\pi(r_{m+l})$ on $\xi$ that shifts all of its components by one.
Hence, the $il+k$th component of $\pi(r_{m+l})\xi$ is 
$$\begin{cases} 
\cA_{l,k+1}\xi_i & \text{ if } 1\leq i \leq 2^m , \ 1\leq k \leq 2^l -1\\
\cA_{l,1}\xi_{i+1} & \text{ if } 1\leq i \leq 2^m , \  k=2^l \\
\end{cases}.$$
Observe that by considering the vectors $\xi_i,\eta_i$ as vectors in $\fH_{t_l}$ via the embedding $\phi(l):\fH\to \fH_{t_l}$ we obtain the formula:
\begin{equation}\label{equa:rot}
\langle\pi(r_l)\xi_i,\eta_i\rangle=\sum_{k=1}^{2^l}\langle\cA_{l,k+1}\xi',\cA_{l,k}\eta'\rangle.
\end{equation}
Therefore,
\begin{align*}
\langle \pi(r_{m+l})\xi,\eta\rangle & = \sum_{i=1}^{2^m}( \sum_{k=1}^{2^l-1} \langle \cA_{l,k+1}\xi_i, \cA_{l,k}\eta_i\rangle + \langle \cA_{l,1}\xi_i, \cA_{l,2^l} \eta_{i-1}\rangle)\\
& = \sum_{i=1}^{2^m}( \langle \pi(r_l)\xi_i,\eta_i\rangle + \langle \cA_{l,1}\xi_i , \cA_{l,2^l} (\eta_{i-1}- \eta_i)\rangle ) \text{ by \eqref{equa:rot}}\\
& = \sum_{i=1}^{2^m}( \langle \pi(r_l)\xi_i,\eta_i\rangle + \langle (B^l)^*A^l\xi_i, \eta_{i-1}- \eta_i\rangle ). \\
\end{align*}
Since $\xi_i,\eta_i$ belong to $\fH$ we have by assumption that $\langle \pi(r_l)\xi_i,\eta_i\rangle$ converges to $\langle x\xi_i,\eta_i\rangle$ and by Lemma \ref{lem:weak-limit} $\langle (B^{l})^*A^{l} \xi_i ,  \eta_{i-1} - \eta_i \rangle$ converges to $0$ for any $1\leq i\leq 2^m$.
Therefore, 
$$\lim_{l\to \infty} \langle \pi(r_{m+l})\xi,\eta\rangle = \sum_{i=1}^{2^m}\langle x\xi_i,\eta_i\rangle = \langle [x]\xi,\eta\rangle.$$
This finishes the proof.
\end{proof}

We deduce an equation that  the limit of $\pi(r_n)$ satisfies when we have some additional commutation assumptions.

\begin{corollary}
Suppose there exists $x\in B(\fH)\cap \{A,B\}'$ such that $\lim_n\langle\pi(r_n)\xi,\eta\rangle=\langle x\xi,\eta\rangle$ for any $\xi,\eta\in \fH$ and further assume that $A^*B$ commutes with $A,B,A^*,$ and $B^*$.
Then the sequence $(\pi(r_n))_n$ converges for the weak operator topology to $[x]$ and $x$ satisfies the following equation
$$x(\id-B^*A) = A^*B.$$
\end{corollary}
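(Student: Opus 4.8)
The plan is as follows. The weak convergence of $(\pi(r_n))_n$ to $[x]$ is not new: its hypotheses are exactly those of the preceding Proposition, so that statement delivers the convergence at once. The real content is the identity $x(\id-B^*A)=A^*B$, and the strategy is to represent the coefficients of $\pi(r_l)$ on the subspace $\fH\subseteq\scrH$ by an explicit operator $S_l\in B(\fH)$, find a recursion relating $S_{l+1}$ to $S_l$, and then pass to the weak limit.

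Concretely, I would set $S_l:=\sum_{k=1}^{2^l}\cA_{l,k}^*\cA_{l,k+1}$ (indices cyclic mod $2^l$), so that formula \eqref{equa:rot} reads $\langle\pi(r_l)\xi,\eta\rangle=\langle S_l\xi,\eta\rangle$ for $\xi,\eta\in\fH$; the convergence hypothesis then says precisely that $S_l\to x$ in the weak operator topology of $B(\fH)$. Two elementary facts about the leaf operators drive the computation. First, passing from level $l$ to level $l+1$ splits $I_{l,k}$ into its halves $I_{l+1,2k-1}$ and $I_{l+1,2k}$, and the construction of $\cA_I$ in Definition \ref{leafoperator} gives $\cA_{l+1,2k-1}=A\cA_{l,k}$ and $\cA_{l+1,2k}=B\cA_{l,k}$. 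Second, iterating the Pythagorean relation $A^*A+B^*B=\id$ along the tree yields $\sum_{k=1}^{2^l}\cA_{l,k}^*\cA_{l,k}=\id$ for every $l$, which I would verify by induction using the splitting relations.

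With these in hand I would expand $S_{l+1}=\sum_{j=1}^{2^{l+1}}\cA_{l+1,j}^*\cA_{l+1,j+1}$ by separating the odd index $j=2k-1$ from the even index $j=2k$. The splitting relations turn the odd terms into $\cA_{l,k}^*A^*B\cA_{l,k}$ and the even terms into $\cA_{l,k}^*B^*A\cA_{l,k+1}$. Here the commutation hypothesis enters decisively: since $A^*B$ commutes with $A,B,A^*,B^*$ it commutes with every $\cA_{l,k}$ and $\cA_{l,k}^*$, so the odd sum collapses to $A^*B\sum_k\cA_{l,k}^*\cA_{l,k}=A^*B$; and since $B^*A=(A^*B)^*$ also commutes with all of $A,B,A^*,B^*$, I can pull it through $\cA_{l,k+1}$ to the right, so the even sum becomes $\big(\sum_k\cA_{l,k}^*\cA_{l,k+1}\big)B^*A=S_lB^*A$. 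This gives the recursion $S_{l+1}=A^*B+S_lB^*A$.

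Finally I would pass to the weak limit. Right multiplication by the fixed bounded operator $B^*A$ is weak-operator continuous, so $S_lB^*A\to xB^*A$, while $S_{l+1}\to x$; the recursion therefore yields $x=A^*B+xB^*A$, that is $x(\id-B^*A)=A^*B$. I expect the only delicate point to be bookkeeping, namely getting the splitting relations and the cyclic indexing exactly right so that the two sums recombine as $A^*B$ and $S_lB^*A$. Once the commutation hypothesis is invoked to strip $A^*B$ and $B^*A$ past the leaf operators, everything else (the iterated Pythagorean identity and the weak continuity of one-sided multiplication) is routine.
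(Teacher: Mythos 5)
Your proposal is correct and takes essentially the same route as the paper: the authors also reduce to $\xi,\eta\in\fH$, split $\langle\pi(r_{n+1})\xi,\eta\rangle$ into the odd and even sums (which they identify, via the commutation hypothesis and $\sum_k\cA_{n,k}^*\cA_{n,k}=\id$, with $\langle A^*B\xi,\eta\rangle$ and $\langle\pi(r_n)B^*A\xi,\eta\rangle$ respectively), and pass to the limit to get $x=A^*B+xB^*A$. Your operator $S_l$ is just a repackaging of the same computation, and the convergence statement is likewise delegated to the preceding Proposition in both arguments.
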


\begin{proof}
Since $A^*B$ commutes with $A,B,A^*,B^*$ we can define the operators $[A^*B]$ and $[B^*A]$ in $B(\scrH)$.
Fix $n\geq 1$, consider $\xi,\eta\in\fH$ and write $\xi_i:=\cA_{n,i}\xi,\eta_i:=\cA_{n,i}\eta$ for any $1\leq i\leq 2^n$ such that $\xi = (t_n,\xi_1, \cdots , \xi_{2^n})$ and $\eta = (t_n,\eta_1, \cdots , \eta_{2^n})$.
Observe that 
\begin{align*}
\langle \pi(r_{n+1}) \xi , \eta \rangle & = \sum_{i=1}^{2^n} \langle B\xi_i , A\eta_i \rangle + \sum_{j=1}^{2^n} \langle A\xi_{j+1} , B \eta_j \rangle\\
& = \langle [A^*B]\xi , \eta \rangle + \langle \pi(r_n) [B^*A] \xi , \eta \rangle\\
& = \langle A^*B\xi , \eta \rangle + \langle \pi(r_n) B^*A \xi , \eta \rangle.
\end{align*}
Taking the limit in $n$ we obtain $\langle x\xi , \eta\rangle = \langle A^*B \xi + x B^*A \xi , \eta\rangle$ for any $\xi,\eta\in \fH$ which proves the lemma.
\end{proof}

We will see that in most examples the sequence of rotations $\pi(r_n)$ does not tend to the identity and hence the action by the group generated by the rotations inside $T$ cannot be continuously extended to the rotation group of the circle.

\section{General properties of Pythagorean representations}
In this section we consider a pair $(A,B)$ of operators acting on the Hilbert space $\fH$ and satisfying the Pythagorean equation.
Denote by $(\pi,\scrH)$ the associated representation of Thompson's group $F$ or $T$.
We give some general properties of $\pi$.
Note that they can be generalised to representations of $F_k$ and $T_k$ for $k\geq 3$.

\subsection{Behavior of coefficients at infinity}\mbox{  }

We start by showing that a nontrivial coefficient of a Pythagorean representation does not tend to zero at infinity.

\begin{proposition}\label{prop:coef}
Consider two vectors $\xi,\eta\in\scrH$ and the associated coefficient $\varphi(g)=\langle \pi(g)\xi,\eta\rangle, g\in F.$
Then 
$$\limsup_{g\to\infty}|\varphi(g)|=\sup_{h\in F}|\varphi(h)|.$$
In particular, $(\pi,\scrH)$ does not produce any nontrivial positive definite function or any nontrivial coefficient  tending to zero at infinity in $F$ (and thus in $T$).
\end{proposition}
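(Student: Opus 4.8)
The plan is to prove the nontrivial inequality $\limsup_{g\to\infty}|\varphi(g)|\ge \sup_{h\in F}|\varphi(h)|$, the reverse inequality being immediate since the limit superior is taken over a subset of the values of $|\varphi|$. It suffices to fix an arbitrary $h\in F$ and produce a sequence $g_n\to\infty$ in $F$ with $\varphi(g_n)\to\varphi(h)$; taking the supremum over $h$ then finishes the argument. My strategy is to write $g_n=\sigma_n h$ where $\sigma_n\to\infty$ is chosen to \emph{asymptotically fix} the vector $\eta$, so that
\[
\varphi(\sigma_n h)=\langle \pi(\sigma_n)\pi(h)\xi,\eta\rangle=\langle \pi(h)\xi,\pi(\sigma_n^{-1})\eta\rangle \longrightarrow \langle\pi(h)\xi,\eta\rangle=\varphi(h).
\]
Since right multiplication by the fixed element $h$ is proper, $\sigma_n\to\infty$ forces $g_n\to\infty$, so the whole problem reduces to constructing escaping sequences that nearly fix a prescribed vector.

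By density of $\bigcup_t \fH_t$ in $\scrH$ and a routine estimate controlled by $\|\xi\|$, I may assume $\eta\in\fH_{t}$ is supported on a fixed tree $t$, say the full binary tree of level $m$, with components $\eta_1,\dots,\eta_{2^m}\in\fH$. The key observation is that the mass of $\eta$ spreads thinly over deep standard dyadic intervals: if $I$ is a level-$(m+j)$ interval reached from the $k$th leaf by a path word $w$, then the component of $\eta$ on $I$ equals $\cA_w\eta_k$, and because the embeddings are isometric the Pythagorean relation gives $\sum_{|w|=j}\|\cA_w\eta_k\|^2=\|\eta_k\|^2$ over the $2^j$ descendants. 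Hence the minimal component norm is at most $\|\eta_k\|\,2^{-j/2}$, and I can select a sequence of intervals $I_n$ whose depth tends to infinity and on which $\eta$ has component norm $\|\eta_{I_n}\|\to 0$.

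Fix once and for all a nontrivial $\sigma\in F$ and let $\sigma_n$ be its image under the self-similar embedding $\phi_{I_n}\colon F\to F$ onto the subgroup acting as the identity outside $I_n$. Because $I_n$ has depth tending to infinity, $\sigma_n$ can only be represented by trees of unbounded depth and therefore leaves every finite subset of $F$, i.e.\ $\sigma_n\to\infty$. On the other hand $\pi(\sigma_n^{-1})=\pi(\phi_{I_n}(\sigma^{-1}))$ acts as the identity on the orthogonal complement of the subspace of $\scrH$ obtained by refining $I_n$, so $\pi(\sigma_n^{-1})\eta-\eta$ is supported on that subspace and $\|\pi(\sigma_n^{-1})\eta-\eta\|\le 2\,\|\eta_{I_n}\|\to 0$. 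This is exactly the asymptotic fixing needed above, and combined with the reduction of the first paragraph it yields $\limsup_{g\to\infty}|\varphi(g)|\ge|\varphi(h)|$ for every $h$, hence the claimed equality. The ``in particular'' then follows at once: a coefficient tending to $0$ at infinity has $\limsup_{g\to\infty}|\varphi|=0$, forcing $\sup_h|\varphi(h)|=0$ and thus $\varphi\equiv 0$; and a nontrivial normalised positive-definite coefficient has $\sup_h|\varphi(h)|=\varphi(e)>0$, so it cannot vanish at infinity.

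The step I expect to be the crux is the construction of the escaping, asymptotically fixing sequence $\sigma_n$: it rests on the mass-spreading estimate coming from the isometric (Pythagorean) embeddings, and on checking rigorously that the self-similar copies $\phi_{I_n}(\sigma)$ genuinely tend to infinity in $F$. Everything else — the reduction to a single $h$, the density approximation, and the final inner-product limit — is soft.
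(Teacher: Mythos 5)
Your proof is correct, and it reaches the conclusion by a genuinely different (and in one respect stronger) route than the paper. The paper argues by contradiction: it normalises so that $|\varphi(e)|$ is within $\varepsilon=\delta/7$ of $\alpha=\sup_h|\varphi(h)|$, approximates $\xi,\eta$ by $\xi',\eta'\in\fH_{t_n}$, and then uses the map $\un g\mapsto\phi(\un g)$ that installs a group element over each leaf of $t_n$; placing an escaping element only over the leaf where $\Vert\xi'_j\Vert\leq 2^{-n/2}$ is minimal perturbs the coefficient by at most $O(\varepsilon)$ while forcing $\phi(\un g)\to\infty$, contradicting $\limsup<\alpha-\delta$. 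You instead fix $h$ and directly build $g_n=\sigma_n h\to\infty$ with $\varphi(g_n)\to\varphi(h)$, by localising a fixed nontrivial $\sigma$ on deep dyadic intervals $I_n$ where $\eta$ has small mass, so that $\Vert\pi(\sigma_n^{-1})\eta-\eta\Vert\to 0$. The engine is the same in both cases --- the Pythagorean/isometry relation forces a unit vector to have a component of norm at most $2^{-j/2}$ among the $2^j$ descendants of a leaf, and an element of $F$ supported over that interval barely moves the coefficient while escaping every finite set --- but you apply it to $\eta$ rather than to $\xi$, and you avoid the contradiction bookkeeping entirely. Your version buys a sharper statement: not merely $\limsup_{g\to\infty}|\varphi(g)|=\sup_h|\varphi(h)|$, but that every value $\varphi(h)$ is a limit point of $\varphi$ at infinity. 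The two points you flag as the crux are indeed the only ones needing care, and both hold: $\pi(\phi_{I_n}(\sigma))$ preserves the orthogonal decomposition of $\scrH$ indexed by a dyadic partition containing $I_n$ and acts as the identity off the $I_n$-summand, giving $\Vert\pi(\sigma_n^{-1})\eta-\eta\Vert\leq 2\Vert\eta_{I_n}\Vert$; and since any tree pair representing the nontrivial element $\phi_{I_n}(\sigma)$ must refine $I_n$, the minimal depth of its representatives tends to infinity, so $\sigma_n$ (hence $\sigma_n h$) leaves every finite subset of $F$.
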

\begin{proof}
Consider  unit vectors $\xi,\eta\in \scrH$ and the associated coefficient $\varphi:F\to\C, g\mapsto \langle \pi(g)\xi,\eta\rangle.$
If $\alpha:=\sup_{h\in F}|\varphi(h)|$ is equal to zero, then the proof is trivial.
We assume that $\alpha$ is nonzero.
Suppose that there exists $\delta>0$ such that $\limsup_{g\to\infty} |\varphi(g)| < \alpha-\delta$.
Set $\varep := \delta/7.$
By changing $\xi$ to some $\pi(h)\xi$ we can assume that $|\varphi(e)|>\alpha-\varep$.
Recall that $t_n$ is the full binary tree with $2^n$ leaves.
By density there exists $n\geq 1$ and unit vectors $\xi',\eta'$ in the space $\fH_{t_n}$ such that $\Vert \xi - \xi' \Vert,\Vert \eta-\eta'\Vert<\varep.$
Since $\fH_{t_n} \subset \fH_{ t_{ n+1 } } $ we can choose a large $n$ satisfying $2^{ -n/2 } <  \varep $.
We define the coefficient  $\varphi'(g):=\langle \pi(g)\xi',\eta'\rangle, g\in F$ and observe that $| |\varphi(g) | - | \varphi'(g) | | < 2\varep, \forall g\in F.$ 
By assumption, there exists a subset $L\subset F$ with finite complement such that $| \varphi(g) | <\alpha-\delta$ for any $g\in L$.
We denote by $\xi_i',\eta_i'$ the $i$th component of the vector $\xi'$ and $\eta'$ respectively.
If $u_i,v_i$ are trees, $g_i= \frac{u_i}{v_i}\in F, 1\leq i\leq 2^n$ we put $\un g=(g_1,\cdots,g_{2^n})$ and 
$$\phi(\un g)=\phi(g_1,\cdots,g_{2^n}):=\frac{ (u_1\bu \cdots \bu u_{2^n})\circ t_n }{ (v_1\bu \cdots \bu v_{2^n})\circ t_n }\in F.$$ 
Recall that $\bu$ denotes the horizontal concatenation of forests as defined in Section \ref{sec:def}.
Therefore, $(u_1\bu \cdots \bu u_{2^n})\circ t_n$ is the tree obtained by gluing the tree $u_i$ on top of the $i$th leaf of $t_n$ for any $i$.
Observe that $\phi(\un g)$ only depends on $\un g$ and does not depend on the choice of the trees $u_i,v_i, 1\leq i\leq 2^n$.  
Moreover, 
$$\langle\pi\circ\phi(\un g)\xi' , \eta' \rangle=\sum_{i=1}^{2^n} \langle\pi(g_i)\xi_i' , \eta_i' \rangle.$$
The group element $\phi(\un g)$ tends to infinity if at least one of the $g_i$ tends to infinity.
Hence there exists a subset $L'\subset F$ with finite complement such that $\phi(\un g)\in L$ if at least one of the $g_i$ is in $L'$.
Let $j$ be the index satisfying that $\Vert \xi_j'\Vert=\min_i \Vert \xi_i'\Vert$.
Note that $\Vert \xi_j'\Vert\leq 2^{-n/2} < \varep$ since $\xi'$ is a unit vector.
Fix $g_j\in L'$ and put $\un g\in F^{2^n}$ whose each entry is equal to the identity except the $j$th entry that is equal to $g_j$.
We have that 
\begin{align*}
\alpha-\delta & > | \varphi ( \phi( \un g ) ) | >  | \varphi' ( \phi( \un g ) ) | -2\varep =  | \sum_{i=1}^{2^n} \langle\pi(g_i)\xi_i' ,\eta_i' \rangle | -2\varep \\
& = | \langle \xi',\eta'\rangle - \langle \xi_j' , \eta_j' \rangle + \langle \pi(g_j)\xi_j' , \eta_j' \rangle | -2\varep \\
& \geq |\varphi'(e) | - |\langle \xi_j' , \eta_j' \rangle| - | \langle \pi(g_j)\xi_j' , \eta_j' \rangle | - 2\varep\\
& > \alpha-3\varep - 2\Vert\xi_j'\Vert  -2\varep > \alpha - 7\varep = \alpha-\delta
\end{align*}
a contradiction.
\end{proof}

\begin{remark}
Since any coefficient of the regular representation $\lambda_F$ of $F$ tends to zero at infinity we obtain that $\lambda_F$ does not embed in $\pi$.
By a result of Dudko-Medynets we obtain that the representation $\pi$ does not admit any II$_1$ direct summands \cite{Dudko-Medynets14}.

Using tensor products instead of direct products we built families of representations having coefficients vanishing at infinity in \cite{Brothier-Jones18}. 
Proposition \ref{prop:coef} demonstrates how different those representations are from the Pythagorean one. 
\end{remark}

\subsection{Invariant vectors}\mbox{  }

A Pythagorean representation of $F$ can be trivial (see \ref{sec:AoneBzero}) and thus can have nonzero invariant vectors.
However, the next proposition shows that there are never nonzero $T$-invariant vectors.

\begin{proposition}
The Pythagorean representation $(\pi,\scrH)$ does not contain any nonzero $T$-invariant vectors.
\end{proposition}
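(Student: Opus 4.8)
The plan is to argue by contradiction: suppose $\xi\in\scrH$ is a nonzero $T$-invariant unit vector and derive that its coefficient $\varphi(g)=\langle\pi(g)\xi,\xi\rangle$ must be identically $1$, which will force a contradiction with the behaviour of the rotations $r_n$ established in Section \ref{sec:rotation}. The key leverage is the weak-limit Lemma \ref{lem:weak-limit}, which says $(B^n)^*A^n\to 0$ weakly, together with the explicit shift action of $\pi(r_n)$ on $\fH_{t_n}$.

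First I would reduce to a vector living in a finite stage. By density, approximate the putative invariant $\xi$ by $\xi'\in\fH_{t_n}$ for some large $n$, identifying $\fH_{t_n}\cong\oplus_1^{2^n}\fH$ and writing $\xi'=(t_n,\xi_1,\cdots,\xi_{2^n})$. The invariance $\pi(r_n)\xi=\xi$ then says that the cyclic shift of the components $(\xi_1,\cdots,\xi_{2^n})$ returns the same vector in the direct limit. The main point is that the rotation $r_n$ cyclically permutes the $2^n$ components (as recorded in Section \ref{sec:rotation}), so an exactly $r_n$-invariant vector at stage $n$ must have all components equal after the rescaling by the leaf operators $\cA_{n,k}$; but these operators are genuine contractions built from $A$ and $B$, so the only way the shifted vector agrees with the original in the limit is a strong constraint on the $\xi_i$.

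Concretely, I would compute $\langle\pi(r_{n+1})\xi,\xi\rangle$ using the formula worked out in the corollary of Section \ref{sec:rotation},
\begin{equation*}
\langle\pi(r_{n+1})\xi,\xi\rangle=\sum_{i=1}^{2^n}\langle A^*B\xi_i,\xi_i\rangle+\sum_{i=1}^{2^n}\langle\pi(r_n)B^*A\xi_i,\xi_i\rangle,
\end{equation*}
and exploit that invariance forces $\langle\pi(r_m)\xi,\xi\rangle=\langle\xi,\xi\rangle=1$ for every $m$. Iterating the shift and combining with the decomposition of Thompson's $T$ into the subgroup generated by the rotations, I would show that invariance propagates: each component $\xi_i$ must itself be invariant under the tail dynamics, and feeding this back through $A$ and $B$ using the Pythagorean relation $A^*A+B^*B=\id$ shows that the norm $\Vert\xi\Vert^2$ would have to equal the weak limit of a quantity controlled by $(B^n)^*A^n$. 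By Lemma \ref{lem:weak-limit} this limit is zero, contradicting $\Vert\xi\Vert=1$.

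The hard part will be organising the bookkeeping so that the cyclic invariance at \emph{every} stage $n$ simultaneously is correctly exploited: a vector fixed by a single $r_n$ is not yet fixed by all of $T$, so I must use more of the group than just the rotations to pin down the components, and then verify that the leaf operators $\cA_{n,k}$ cannot conspire to keep the norm from leaking out. I expect the cleanest route is to show directly that $T$-invariance of $\xi$ forces, for each fixed stage, that $\pi(r_n)$ acts trivially on $\xi$, and then to invoke the computation above to see that $\langle(B^n)^*A^n\xi_i,\xi_j\rangle$-type terms must account for a fixed positive fraction of $\Vert\xi\Vert^2$, which vanishes in the limit by Lemma \ref{lem:weak-limit}, yielding the contradiction.
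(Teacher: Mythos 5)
There is a genuine gap: your contradiction mechanism cannot work as described. Your first step (pass to $\xi'\in\fH_{t_n}$ and use the cyclic shift $\pi(r_n)$ to force the components to be nearly equal) is essentially the paper's opening move, but from there you try to close the argument using only the rotations together with Lemma \ref{lem:weak-limit}. That lemma only kills the single ``wrap-around'' cross term $\langle (B^l)^*A^l\xi_i,\eta_{i-1}-\eta_i\rangle$ in the rotation coefficient; the main diagonal terms survive, and in general $\pi(r_n)$ converges weakly to a \emph{nonzero} operator $[x]$ --- for $A=B=\tfrac{1}{\sqrt 2}$ it converges to the identity. So the identities $\langle\pi(r_m)\xi,\xi\rangle=1$ for all $m$ are perfectly consistent with the asymptotics of the rotations, and no contradiction of the form ``$\Vert\xi\Vert^2$ equals a weak limit controlled by $(B^n)^*A^n$'' can be extracted. (A secondary issue: the displayed formula you quote from the corollary of Section \ref{sec:rotation} is only valid under the extra hypothesis that $A^*B$ commutes with $A,B,A^*,B^*$, which you cannot assume here.) You do flag that ``more of the group than just the rotations'' is needed, but you never say which elements or how the Pythagorean relation enters, and that is precisely where the content of the proof lies.

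For comparison, the paper's route after the rotation/pigeonhole step is: the near-constancy of the components produces, for every $\varep>0$, a unit vector $\zeta$ in the \emph{small} space $\fH$ with $\Vert\pi(g)\zeta-\zeta\Vert<\varep$ for all $g\in T$. Applying this to the two specific elements $g=\frac{f_1\circ f_1}{f_2\circ f_1}\in F$ and $h=\frac{(f_1\circ f_1,1)}{(f_2\circ f_1,0)}\in T$ yields $\Vert A\zeta-B\zeta\Vert<\varep$ and that $\eta:=A\zeta$ satisfies $\Vert A\eta-\eta\Vert<\varep$, $\Vert B\eta-\eta\Vert<3\varep$, with $\Vert\eta\Vert>\frac{1-\varep}{2}$. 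Feeding this into $\eta=A^*A\eta+B^*B\eta$ gives $\Vert\eta\Vert\leq(\sqrt 2+\sqrt 6)\sqrt{\varep\Vert\eta\Vert}$, which is the contradiction. If you want to salvage your write-up, you should replace the appeal to Lemma \ref{lem:weak-limit} by an argument of this kind that genuinely uses non-rotation elements and the Pythagorean identity on $\fH$.
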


\begin{proof}
Assume that $\xi\in\scrH$ is a $T$-invariant unit vector and fix $0<\varep'<1/3$.
By density, there exists $n\geq 1$ and a unit vector $\xi'=(t_n,\xi'_1,\cdots, \xi'_{2^n}) \in\fH_{t_n}$ such that $\Vert \xi - \xi'\Vert<\varep'$, where $t_n$ is the full binary planar tree with $2^n$ leaves.
Let $r_n\in T$ be the rotation of angle $2^{-n}$.
Since $\xi$ is $T$-invariant, we have that $\Vert \pi(r_n)^i\xi' - \xi' \Vert<2\varep'$ for any $i\geq 0$.
Set $S(i):=\sum_{k=1}^{2^n} \Vert \xi'_{i+k}-\xi'_i\Vert^2$, where we take the convention that $\xi'_{2^n + j} =\xi'_j$ for any $j$ and observe that 
$$\sum_{i=1}^{2^n} S(i)=\sum_{j=1}^{2^n} \Vert \pi(r_n)^j\xi'-\xi'\Vert^2<2^n.4\varep'^2.$$
Therefore, there exists $j$ such that $S(j)<4\varep'^2$.
Define the vector $\xi''\in\fH_{t_n}$ such that each entry is equal to $\xi'_j$.
We have $\Vert \xi' - \xi''\Vert^2 = S(j) <4\varep'^2$ which implies that $\Vert \xi -\xi''\Vert<3\varep'$ and $\Vert \pi(g)\xi'' - \xi''\Vert<6\varep'$ for any $g\in T.$
Let us show that $\xi'_j$ is close to being invariant under the action of the group $T$.

Consider an element $g=\frac{(a,k)}{(b,0)}\in T$ where $a,b$ are trees and $k\geq 0$.
We put $g_n:=\frac{( (a)_n\circ t_n, k ) }{ ( (b)_n\circ t_n, 0 ) }$ where $(a)_n$ is the forest with $2^n$ roots, each of whose tree is equal to $a$.
We have that 
$$ \langle \pi( g_n ) \xi'',\xi'' \rangle = 2^n \langle \pi(g) \xi'_j , \xi'_j \rangle.$$
Moreover, we have that $1+3\varep'>2^{n/2}\Vert \xi'_j\Vert = \Vert \xi''\Vert > 1 - 3\varep'>0$.
We obtain that $$\frac{| \langle \pi(g) \xi_j' , \xi_j'\rangle |}{\Vert \xi_j'\Vert^2} = \frac{ | \langle \pi( g_n ) \xi'',\xi'' \rangle | }{\Vert \xi''\Vert^2} > 
\frac{ \Vert \xi''\Vert - \Vert \pi(g_n)\xi'' - \xi''\Vert } { \Vert \xi''\Vert } > \frac{1-9\varep'}{1-3\varep'}.$$
Since $\varep'$ was arbitrary, we obtain that for any $\varep>0$ there exists a unit vector $\zeta$ in the small Hilbert space $\fH$ such that  $ \Vert \pi(g)\zeta - \zeta \Vert<\varep$ for any $g\in T$.
Fix such a $\zeta$ for $0<\varep<1.$
Consider the trees $a:= f_1\circ f_1$ and $b:=f_2\circ f_1$.
Put $g:=\frac{a}{b}\in F$ and $h=\frac{(a,1)}{(b,0)}\in T$ and observe that
\begin{align}\label{equa:A}
\Vert \pi(g)\zeta - \zeta\Vert^2 = \Vert A^2\zeta - A\zeta \Vert^2 + \Vert BA\zeta - AB\zeta \Vert^2 + \Vert B\zeta - B^2\zeta \Vert^2 & < \varep^2 ;\\ \label{equa:B}
\Vert \pi(h)\zeta - \zeta\Vert^2 = \Vert A^2\zeta - AB\zeta \Vert^2 + \Vert BA\zeta - B^2\zeta \Vert^2 + \Vert B\zeta - A\zeta \Vert^2 & < \varep^2.
\end{align}
Put $\eta:=A\zeta$ and observe that $\Vert A\eta - \eta\Vert<\varep$.
Moreover, $1=\Vert \zeta\Vert \leq \Vert A^*A\zeta\Vert + \Vert B^*B\zeta\Vert \leq \Vert A\zeta\Vert + \Vert B^*(B\zeta - A\zeta)\Vert + \Vert B^*A\zeta\Vert < 2\Vert \eta\Vert +\varep$ and thus $\Vert \eta\Vert >\frac{1-\varep}{2}>0$. 
The inequalities \eqref{equa:A}, \eqref{equa:B} imply that $\Vert A\eta - \eta\Vert<\varep$ and $\Vert B\eta - \eta\Vert<3\varep$.
Therefore,
\begin{align*}
\Vert A^*A\eta - \eta \Vert^2 & = \Vert A^*A\eta\Vert^2 + \Vert \eta\Vert^2 - 2\Vert A\eta\Vert^2\\
& \leq \Vert \eta \Vert^2 - \Vert A\eta\Vert^2 \leq 2 \Vert \eta\Vert ( \Vert \eta\Vert - \Vert A\eta\Vert ) \text{ since } \Vert A\Vert\leq 1\\
& \leq 2  \Vert \eta \Vert  \Vert A\eta - \eta\Vert < 2\varep  \Vert \eta \Vert.
\end{align*}
A similar argument combined with the Pythagorean equality gives that 
$$\Vert A^*A\eta\Vert^2=\Vert B^*B\eta -\eta\Vert^2<6\varep\Vert \eta\Vert.$$
Since $\eta=A^*A\eta+B^*B\eta$ we obtain that $\Vert\eta\Vert \leq (\sqrt{2}+ \sqrt{6})\sqrt{\varep\Vert \eta\Vert}$, a contradiction since $\eta\neq 0$ and $\varep$ is arbitrary small.
Therefore, $\pi$ does not have any nonzero $T$-invariant vectors.
\end{proof}

\begin{question}
Is there a Pythagorean representation that admits an almost $T$-invariant vector?
\end{question}

\section{Solutions of the Pythagorean equation}
 
 We now begin to investigate particular Pythagorean representations. 
 \begin{definition}
Let $\rho_{A,B}^\Omega$ (or just $\rho$ when there is no ambiguity)
be the subrepresentation of $\pi_{A,B}$ spanned by the orbit of $\Omega$ under $F$ (or $T$).
 \end{definition}
 
 We remind the reader that, if a unitary representation $\sigma$ admits a cyclic unit vector $\xi$ such that $|\langle \sigma(g)\xi,\xi\rangle|=1$ or $0$, then $\sigma$ is
 the representation \emph{induced} from the character $\chi$ of the subgroup $H$ for which $\langle \sigma(h)\xi,\xi\rangle=\chi(h),\  \forall h\in H$.

Note that the construction of Pythagorean representations behaves well under direct sums.
Indeed, if $(A_i,1\leq 1\leq n)$ and $(A_i', 1\leq i\leq n)$ are two families of operators satisfying the Pythagorean equation, then $(A_i'':=A_i\oplus A_i', 1\leq 1\leq n)$ satisfies it as well and we have that $\pi_{A''}\simeq \pi_{A}\oplus\pi_{A'}$.
One has a similar statement for infinite direct sums and direct integrals of operators.
For this reason we will consider examples that cannot be trivially decomposed into a direct sum.
 
 \subsection{The simplest case: $A=1, B=0$}\label{sec:AoneBzero}\mbox{  }

 The simplest solution of the Pythagorean equation is no doubt
 when $A$ or $B$ is zero and $\fH=\C$. In this case it is clear from the formula
\ref{formula} that $\langle\pi_{A,B}(g)\Omega,\Omega\rangle=1$ for
$g\in F$ since only the path that stays on the left contributes to
the sum. Thus $\rho_{A,B}^\Omega$ is trivial.

Perhaps surprisingly the representation $\rho_{A,B}$ of $T$ is not
trivial. For any path from bottom to top for a pair of trees in $T$ but not in $F$ must pass through an edge labelled $A$ and one labelled
$B$ or $B^*$. So by our remark, the representation of $T$ is the one induced from the trivial representation of  $F$.

\subsection{The usual action: $A=B=\frac{1}{\sqrt 2}$}\mbox{  }

Recall that $T$ acts by piecewise linear homeomorphisms on the circle $\bfS$ by transformations which are differentiable at all but a finite number of points, and the slopes are powers of $2$.

\begin{proposition} 
When $A=B=\frac{1}{\sqrt 2}$ and $\fH=\C$ there is an isomorphism from $\scrH$ to $L^2(\bfS)$ intertwining  the action $\pi_{A,B}$ of $T$ (hence $F$ by restriction) with the action on $L^2(\bfS)$ given by: $$g^{-1}\cdot f(z)=\sqrt {g'(z)} f(g(z)),$$
for any $g\in T, f\in L^2(\bfS),$ and almost every $z\in \bfS.$
\end{proposition}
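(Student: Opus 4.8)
The plan is to write down an explicit unitary $U\colon \scrH\to L^2(\bfS)$ tree by tree and then check that it intertwines the two actions. Identify $\bfS$ with $\R/\Z=[0,1)$ with Lebesgue measure. Recall from Definition \ref{leafoperator} that a leaf $\ell$ of a tree $t$ is the same datum as a standard dyadic interval $I$, and that with $A=B=\tfrac{1}{\sqrt2}$ the leaf operator is the scalar $\cA_I=2^{-k/2}=\sqrt{|I|}$, where $k$ is the depth of $I$. For a tree $t$ with leaf intervals $\{I\}$ and a vector $\oplus_I\xi_I\in\Phi(\target(t))=\C^{\{I\}}$, I would set
$$U\Big(\tfrac{t}{\oplus_I\xi_I}\Big)=\sum_I\frac{\xi_I}{\sqrt{|I|}}\,\mathbf{1}_I\in L^2(\bfS).$$

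First I would check that $U$ is well defined on the direct limit, i.e.\ compatible with the inclusions $\iota_s^t=\Phi(f)$. Since every refinement is a composition of elementary carets $f_{i,n}$, it suffices to treat one caret: splitting $I$ into its halves $I_0,I_1$ sends $\xi_I$ to $\tfrac{1}{\sqrt2}\xi_I$ on each of $I_0,I_1$ (because $A=B=\tfrac1{\sqrt2}$), and since $|I_0|=|I_1|=|I|/2$ the two summands recombine to $\frac{\xi_I}{\sqrt{|I|}}\mathbf{1}_I$, leaving the function unchanged. The same normalisation makes $U$ isometric: the inner product $\sum_I\xi_I\overline{\eta_I}$ on $\Phi(\target(t))$ equals $\int_{\bfS}\big(\sum_I\frac{\xi_I}{\sqrt{|I|}}\mathbf{1}_I\big)\overline{\big(\sum_I\frac{\eta_I}{\sqrt{|I|}}\mathbf{1}_I\big)}\,dz$, since the $\mathbf{1}_I$ have disjoint supports of mass $|I|$. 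Finally the image of $U$ consists of the functions constant on the intervals of a standard dyadic partition, which are dense in $L^2(\bfS)$; hence $U$ extends to a unitary. (Note $U(\Omega)=\mathbf{1}_{[0,1)}$, the constant function.)

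It then remains to verify the intertwining. Write $g=\frac{t}{s}\in T$ with leaf intervals $\{I_\ell^s\}$ of $s$ and $\{I_\ell^t\}$ of $t$ identified through the common leaf index $\ell$, and recall that $g$ acts on $\bfS$ as the piecewise-linear homeomorphism carrying each $I_\ell^s$ affinely onto $I_\ell^t$ in cyclic order. Representing a given $f\in\scrH$ at the level of $s$ as $\frac{s}{\oplus_\ell\xi_\ell}$ (possible after stabilising), the action formula $\frac{t}{s}(\frac{s}{x})=\frac{t}{x}$ gives $\pi(g)f=\frac{t}{\oplus_\ell\xi_\ell}$, so $\pi(g)f$ has value $\xi_\ell/\sqrt{|I_\ell^t|}$ on $I_\ell^t$ while $f$ has value $\xi_\ell/\sqrt{|I_\ell^s|}$ on $I_\ell^s$. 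For $z\in I_\ell^t$ one has $g^{-1}(z)\in I_\ell^s$ and $(g^{-1})'(z)=|I_\ell^s|/|I_\ell^t|$, whence
$$\sqrt{(g^{-1})'(z)}\,f(g^{-1}(z))=\sqrt{\tfrac{|I_\ell^s|}{|I_\ell^t|}}\cdot\tfrac{\xi_\ell}{\sqrt{|I_\ell^s|}}=\tfrac{\xi_\ell}{\sqrt{|I_\ell^t|}}=(\pi(g)f)(z).$$
Applying the formula in the statement to $g^{-1}$ shows the stated action is $g\cdot f(z)=\sqrt{(g^{-1})'(z)}\,f(g^{-1}(z))$, so this is exactly $U\pi(g)=g\cdot U$; the restriction to $F<T$ is the same computation.

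The only delicate points are bookkeeping rather than conceptual. One must fix conventions so that $g=\frac{t}{s}$ is the homeomorphism taking the domain partition $\{I_\ell^s\}$ to the range partition $\{I_\ell^t\}$ (and not its inverse), and track the square root so that the affine cocycle $z\mapsto\sqrt{(g^{-1})'(z)}$ produced by the leaf-relabelling matches the $\sqrt{g'}$ of the statement. The two identified endpoints of $[0,1)$ under the $T$-action, together with the endpoints of the dyadic intervals, form a null set and are irrelevant in $L^2$.
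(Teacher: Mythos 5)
Your proposal is correct and follows essentially the same route as the paper: you define the same family of isometries $V_t(\oplus_I\xi_I)=\sum_I \xi_I\chi_I/\sqrt{\Leb(I)}$, check compatibility with the caret maps so that they assemble into a unitary onto $L^2(\bfS)$, and then verify the intertwining leaf by leaf. The only (harmless) divergence is cosmetic: the paper invokes Stone--Weierstrass for density of the image, whereas you observe directly that dyadic step functions are dense, and you write out the intertwining computation that the paper leaves as ``one can check.''
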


\begin{proof}
Given $t\in\cF(1,n)$ we put $\cI_t$ the set of standard dyadic intervals corresponding to the leaves of $t$ and write $(t,\oplus_{I\in\cI_t}\xi_I)$ the elements of $\fH_t$.
Consider the map 
$$V_t:\fH_t\to L^2(\bfS), (t,\oplus_{I\in\cI_t}\xi_I)\mapsto \sum_{I\in\cI_t} \frac{\chi_I \xi_I}{\sqrt{\Leb(I)}},$$
where $\chi_I$ is the characteristic function of the interval $I$ and $\Leb$ is the Lebesgue measure.
It is easy to see that $V_t$ is an isometry and that $V_s = V_t\circ\iota^s_t$ if $t\leq s$ as trees.
Therefore, the system of maps $(V_t,t)$ defines an isometry $V$ from $\scrH$ to $L^2(\bfS)$.
This map has dense image by Stone-Weierstrass theorem and thus is surjective since the range of an isometry is closed.
One can check that the unitary operator $V$ intertwines $\pi_{A,B}$ and the action on the circle described above.
\end{proof}

Interestingly, multiplying both $A$ and $B$ by a fixed $\omega \in \C$ with $|\omega|=1$ actually changes the representation non-trivially. 
To be precise, the formula for $g$ acting on $L^2(\bfS)$ becomes 
$$g^{-1}\cdot f(z)=\omega^{-\log_2g'(z)}\sqrt {g'(z)} f(g(z)).$$
This can be proved by defining the family of isometries
$$V^\omega_t:\fH_t\to L^2(\bfS), (t,\oplus_{I\in\cI_t}\xi_I)\mapsto \sum_{I\in\cI_t} \frac{ \chi_I \xi_I \omega^{\log_2(\Leb(I))} }{\sqrt{ \Leb(I)}  }$$
and mimicking the proof above.
Note that, these ways of twisting the trivial bundle over $\bfS$ are not possible for all of $\Diff(\bfS)$.

\begin{remark}
Note that the rotation subgroup of $T$ acts by rotations so extends to all of the rotation group $\Rot(\bfS)$, and the representation on all of $T$ has strong continuity properties, in stark contrast to the representations of \cite{Jones16-Thompson}.
In general, it is a very difficult task to compute limits of rotations when the angle tends to zero especially when dealing with representations built with tensor products. 
Interestingly, this computation is more feasible in the Pythagorean context. As we will see in coming examples the limit of rotations will generically not tend to the identity thus excluding the possibility to extend the action to $\Rot(\bfS)$.
Is there a condition assuring the continuity of the rotations for a Pythagorean representation?
\end{remark}

\subsection{Arbitrary sums of real scalars: $A=\cos \theta, B=\sin \theta$}\mbox{ }

Here $\fH=\C$ and $\Omega$ is the number $1$. 
These Pythagorean representations interpolate between the last two.
We want to prove that for any $\theta$ they have no fixed vectors.
In the next section we will provide explicit formulae of limits of rotations for any choice of \emph{complex} scalars $A,B$.

Abbreviate $\cos \theta $ and $\sin \theta$ to $c$ and $s$ respectively.
Let $t_n$ be the full binary planar tree with $2^n$ leaves and each branch of length $n$. 
Now let $g_n$ be the elements of $F$ given by the pairs of trees consisting of $t_n$ on top and on bottom and $2^n$ copies of a simple pair of trees (the choice of which is not important) joining the leaves of the top to the bottom, thus:
$g_n=$  \vpic {showT} {1in} 
where each dotted circle is filled with a copy of the pair of trees: 
\vpic {littleg} {0.3 in}

As a preliminary calculation let us figure out $\langle \pi(g_n)\Omega,\Omega\rangle$.
By the formula \ref{formula} the copies of the small pair of trees can all be removed if one multiplies by $c^3+s^2c^2+ s^3$. 
Once removed one sees simply $\langle \Omega, \Omega\rangle=1$. 
So for every $n$, $\langle \pi(g_n)\Omega,\Omega\rangle=c^3+s^2c^2+ s^3$.

We now proceed to show that the weak limit of the $\pi(g_n)$ on $\scrH$ is actually $c^3+s^2c^2+ s^3$. 
For this it suffices by boundedness of the $\pi(g_n)$ to show that 
$\lim_{n\rightarrow \infty} \langle \pi(g_n)\xi,\eta\rangle=(c^3+s^2c^2+ s^3)\langle \xi,\eta\rangle$
for every $\xi$ and $\eta$ in $(t_k,\oplus_1^{2^k} \mathfrak H)$ for all $k$. 
So let $( t_k,v)$ and $( t_k,w)$ be given with $v$ and $w$ in $\oplus_1^{2^k} \mathfrak H$. 
It is convenient to represent $v$ and $w$ as rectangular boxes which assign the entries of the vector to each of $2^k$ edges emanating from the top of the box. 
Forests can then be attached to the emanating edges to represent the action of  the category of forests on the vectors.
For instance if $v=(v_1,v_2)\in \C\oplus \C$, the following diagram represents the element $(cv_1,csv_1,s^2v_1,v_2)\in \oplus_1^4 \C$: \vpic {example} {.6in}

So suppose $v$ and $w$ are given in $\oplus_1^{2^k} \fH$ and $n$ is significantly larger than $k$.  
Then $\xi:=(t_k,v)$ and $\eta:=(t_k,w)$ are equal in $\scrH$ to pairs $(t_n,\hat v)$ and $( t_n,\hat w)$ where  $\hat v$ and $\hat w$  are obtained from $v$ and $w$ by attaching $2^k$ copies of $ t_{n-k}$ to the edges emanating from the boxes containing $v$ and $w$ thus:
\vpic {stabilise1} {.6in}
where for this illustration, $k=1$ and $n=3$.
If $s$ is a tree, we put $(s)_m$ the forest with $2^m$ trees all equal to $s$.
Consider the trees $a,b\in\cF(1,3)$ such that $g_n=\frac{(a)_n\circ t_n}{(b)_n\circ t_n}$.
Then 
\begin{align*}
\langle \pi(g_n)\xi , \eta \rangle & = \langle \frac{(a)_n\circ t_n}{(b)_n\circ t_n} \frac{t_n}{\hat v} , \frac{t_n}{\hat w}\rangle
 = \langle \Phi((b)_n)\hat v, \Phi((a)_n)\hat w \rangle\\
& = \langle \Phi(b) \Omega , \Phi(a) \Omega \rangle  \langle \hat v , \hat w\rangle
 = (c^3+s^2c^2+ s^3)   \langle \hat v , \hat w\rangle\\
& = (c^3+s^2c^2+ s^3)   \langle \xi , \eta\rangle.\\
\end{align*}
We obtain that the sequence $(\pi(g_n)_n)$ tends to $(c^3+s^2c^2+ s^3)\id$ for the weak operator topology.
In particular, there are no nonzero $F$-invariant vectors.

\subsection{Arbitrary scalars: computation of limits of rotations}\mbox{ }

Consider the one dimensional Hilbert space $\fH:=\C$ and the vacuum $\Omega=1$.
We consider a pair of complex numbers $A=a,B=b\in \C$ satisfying $| a |^2 + | b|^2=1$.
Let $(\pi,\scrH)$ be the associated representation.
Since $\pi$ is somewhat trivial when $a$ or $b$ is zero we assume that they are both nonzero and thus $|a|,|b|<1$.
We are interested in computing limits of rotations.
Let $r_n\in T$ be the rotation by $2^{-n}$, $r^j_n$ its $j$th power and set $\omega:=a \bar b$.

\begin{proposition}
For any $j\geq 1$ the sequence of operators $(\pi(r^j_n))_n$ converges for the weak operator topology to a scalar $x_j$ times the identity $\id$.
The family of scalars $(x_j)_j$ satisfy the following relations:
$$x_1 = \frac{\bar \omega}{1-\omega} , \ x_{2k} = x_k \text{ and } x_{2k+1} = \bar\omega x_k + \omega x_{j+1}, \ \forall k\geq 1.$$
\end{proposition}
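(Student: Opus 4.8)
The plan is to compute the matrix coefficients $\langle\pi(r_n^j)\xi,\eta\rangle$ exactly on the dense subspace $\bigcup_m\fH_{t_m}$ of $\scrH$, to derive a recursion in $n$ that relates level $n+1$ to level $n$ while \emph{halving} the exponent $j$, and then to run a strong induction on $j$ to obtain convergence of these coefficients. Weak operator convergence on all of $\scrH$ then follows from the uniform bound $\Vert\pi(r_n^j)\Vert=1$ together with density.

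First I would fix $\xi,\eta\in\fH_{t_m}$, written $\xi=(t_n,(\xi_1,\dots,\xi_{2^n}))$ and similarly for $\eta$ with scalar entries (as $\fH=\C$), and record that on $\fH_{t_n}$ the operator $\pi(r_n^j)$ is the cyclic shift whose $p$th component is the $(p+j)$th component of $\xi$. The embedding $\fH_{t_n}\subset\fH_{t_{n+1}}$ doubles each component through a caret, so the $(2i-1)$th and $(2i)$th entries of $\xi$ inside $\fH_{t_{n+1}}$ become $A\xi_i=a\xi_i$ and $B\xi_i=b\xi_i$. Writing $C_n^{(j)}:=\langle\pi(r_n^j)\xi,\eta\rangle$ for $n\ge m$ and splitting the shift-by-$(2k+1)$ sum according to the parity of the component index, the even/odd bookkeeping collapses, using $A^*B=\bar a b=\bar\omega$ and $B^*A=\bar b a=\omega$, to the two exact identities
\begin{equation*}
C_{n+1}^{(2k)}=C_n^{(k)},\qquad C_{n+1}^{(2k+1)}=\bar\omega\,C_n^{(k)}+\omega\,C_n^{(k+1)}\qquad(k\ge 0),
\end{equation*}
the second of which generalizes the computation in the Corollary above and the first of which is also immediate from $r_{n+1}^2=r_n$ in $T$. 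I expect the main obstacle here to be pure bookkeeping: keeping the cyclic indices mod $2^{n+1}$ straight and checking that the boundary term $\langle A\xi_{i+k+1},B\eta_i\rangle$ lands on the correct neighbouring block.

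With the recursion in hand I would define candidate limits by $x_0=1$, $x_{2k}=x_k$ and $x_{2k+1}=\bar\omega x_k+\omega x_{k+1}$; every index on the right is strictly smaller than that on the left except in the self-referential case $x_1=\bar\omega+\omega x_1$, and since $|\omega|=|a||b|\le\tfrac12<1$ this is solvable, giving $x_1=\bar\omega/(1-\omega)$ as asserted (the $x_{j+1}$ in the statement being a misprint for $x_{k+1}$). Setting $D_n^{(j)}:=C_n^{(j)}-x_j\langle\xi,\eta\rangle$, the recursion yields $D_{n+1}^{(2k)}=D_n^{(k)}$, $D_{n+1}^{(2k+1)}=\bar\omega D_n^{(k)}+\omega D_n^{(k+1)}$, with $D_n^{(0)}=0$. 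I would then show $\lim_n D_n^{(j)}=0$ by strong induction on $j$. The case $j=0$ is trivial; the genuinely delicate case is $j=1$, which the induction cannot reduce: here the recursion is $D_{n+1}^{(1)}=\omega D_n^{(1)}$, tending to $0$ precisely because $|\omega|<1$ (this is exactly where $A,B\neq0$ enters). For $j\ge 2$ every index appearing on the right ($k$, or $k$ and $k+1$) is strictly less than $j$, so the inductive hypothesis closes the step.

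Finally, having established $\langle\pi(r_n^j)\xi,\eta\rangle\to x_j\langle\xi,\eta\rangle$ for all $\xi,\eta$ in the dense union $\bigcup_m\fH_{t_m}$, I would extend this to arbitrary $\xi,\eta\in\scrH$ by a routine $3\varepsilon$-argument using $\Vert\pi(r_n^j)\Vert=1$. This shows $\pi(r_n^j)\to x_j\,\id$ in the weak operator topology, which is the asserted convergence to a scalar multiple of the identity, and the three displayed relations among the $x_j$ are simply their defining recursion.
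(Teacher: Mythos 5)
Your proposal is correct and follows essentially the same route as the paper: the same parity-splitting of the shifted inner product giving the recursion $C_{n+1}^{(2k+1)}=\bar\omega C_n^{(k)}+\omega C_n^{(k+1)}$ and $C_{n+1}^{(2k)}=C_n^{(k)}$, the same contraction argument via $|\omega|<1$ for the base case $j=1$, the same induction on $j$ for the odd case, and the same density extension using $\Vert\pi(r_n^j)\Vert=1$. Your observation that the $x_{j+1}$ in the statement should read $x_{k+1}$ is also correct.
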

\begin{proof}
We give a proof which does not use results of Section \ref{sec:rotation}.
Let us prove by induction on $j\geq 1$ that the sequence $(\pi(r_n^j))_n$ converges for the weak operator topology to $x_j$ (times the identity). 
Consider $\xi,\eta\in\fH_{t_n}$ with $i$th component written $\xi_i$ and $\eta_i$ respectively and put $\xi_{2^n+i}:=\xi_i$ for any $i$.
Observe that 
\begin{align*}
\langle \pi(r_{n+1})\xi , \eta \rangle & = \sum_{i=1}^{2^n} \langle B\xi_{i} , A\eta_i\rangle + \sum_{j=1}^{2^n} \langle A\xi_{j+1} , B\eta_j \rangle\\
& = \sum_{i=1}^{2^n} \bar a b \xi_{i} \bar\eta_i + \sum_{j=1}^{2^n} a\bar b \xi_{j+1}\bar\eta_j\\
& = \bar \omega \langle \xi , \eta \rangle + \omega \langle \pi(r_{n})\xi , \eta \rangle.
\end{align*}
Put $y_n:= \langle \pi(r_n)\xi,\eta\rangle -x_1\langle \xi,\eta\rangle$ and observe that $y_{n+1} = \omega y_n$ with $|\omega|<1$.
Therefore, $(y_n)_n$ tends to zero and $\langle \pi(r_n)\xi,\eta\rangle$ tends to $x_1\langle\xi,\eta\rangle$.
By density we obtain that $(\pi(r_n))_n$ converges (for the weak operator topology) to $x_1$.
Remark that if $(\pi(r_n^j))_n$ converges to $x_j$, then $(\pi(r_n^{2j}))_n$ converges to the same limit since $r_n^{2j} = r_{n-1}^j$ and in particular $(\pi(r_n^2))_n$ converges to $x_1$.

Consider $k\geq 1$ and assume that $(\pi(r_n^j))_n$ converges to $x_j$ for any $1\leq j\leq 2k$.
Fix some vectors $\xi,\eta\in\fH_{t_n}$ and observe that 
\begin{align*}
\langle \pi(r_{n+1}^{2k+1})\xi , \eta \rangle & = \sum_{i=1}^{2^n} \langle B\xi_{i+k} , A\eta_i\rangle + \sum_{j=1}^{2^n} \langle A\xi_{j+k+1} , B\eta_j \rangle\\
& = \sum_{i=1}^{2^n} \bar a b \xi_{i+k} \bar\eta_i + \sum_{j=1}^{2^n} a\bar b \xi_{j+k+1}\bar\eta_j\\
& = \bar\omega  \langle \pi(r_{n}^k)\xi , \eta \rangle + \omega \langle \pi(r_{n}^{k+1})\xi , \eta \rangle. 
\end{align*}
The induction hypothesis implies that the right hand side converges to $(\bar \omega x_k + \omega x_{k+1})\langle \xi , \eta\rangle$ and we obtain by density that $(\pi(r_n^{2k+1}))_n$ converges to $\bar \omega x_k + \omega x_{k+1}$.
This finishes the proof of the proposition.
\end{proof}

The formulae $x_{2k}=x_k$ and $x_{2k+1} = \bar\omega x_k + \omega x_{k+1}$ give us an algorithm for computing $x_j$.
Given $j\geq 1$ we construct a decorated binary tree which will determine $x_j$.
Start with the trivial tree and its root decorated by $j$.
If $j$ is a power of two, then we keep this trivial decorated tree and $x_j=x_1=\frac{\bar \omega}{1-\omega}$.
If not, consider $n$ such that $\frac{j}{2^n}$ is an odd integer equal to $2k+1$ for a certain $k> 0$.
We add a decorated trivalent node with $j$ on the bottom, $k$ on the top left, and $k+1$ on the top right.
Then we continue the same construction applied to $k$ and $k+1$ until we reach $1$ everywhere.
We obtain a binary tree $t(j)$.
For any leaf $\ell$ of $t(j)$ we associate the scalar $c_\ell:=\bar\omega^l\omega^r$ where $l$ (resp. $r$) is the number of left turns (resp. right turns) in the (geodesic) path from the root of $t(j)$ to the leaf $\ell$.
We obtain the formula $$x_j = \sum_{\ell \text{ a leaf of } t(j) } c_\ell x_1.$$
For example, if $j=13$, then
$$t(13)=\xthirteen$$ 
and
$$x_{13} = (\bar\omega^2 +\bar \omega\omega + \bar\omega^2\omega + \bar\omega \omega^2 + \omega^2)x_1.
$$

\subsection{Unitaries}\label{unitaries}\mbox{  }

If $u$ and $v$ are unitaries on some Hilbert space $\fH$, the pair $(A=\frac{1}{\sqrt 2}u, B=\frac{1}{\sqrt 2}v)$
is a Pythagorean pair. We will fix this choice of $(A,B)$ for the remainder of this subsection. For simplicity we restrict our
calculations to $F$, leaving the case of $T$ to the reader.

\begin{theorem}\label{cocycle} 
There is a \emph{unitary cocycle} $F\times [0,1]\ni (g,x) \mapsto U_{g,x}$ on $F$ with coefficients in the unitary group of $\fH$ such that the representation $\pi_{A,B}$ on $\scrH$ is equivalent to the unitary representation $\sigma$ on $L^2([0,1],\fH)$ given by
$$\sigma_g(\xi)(x)=g'(x)^{-1/2}U(g,x)(\xi(g^{-1}x)).$$
 
\end{theorem}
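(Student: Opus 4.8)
The plan is to follow the proof of the scalar case $A=B=\frac{1}{\sqrt2}$, which identified $\scrH$ with $L^2(\bfS)$, while carrying the unitaries $u,v$ along as an $\fH$-valued twist. Throughout identify $\bfS\simeq[0,1]$. Recall from Definition \ref{leafoperator} that for a standard dyadic interval $I$ the leaf operator $\cA_I$ is the product of $A=\frac1{\sqrt2}u$ and $B=\frac1{\sqrt2}v$ read along the unique path to $I$; hence $\cA_I=\sqrt{\Leb(I)}\,W_I$, where $W_I$ is the \emph{unitary} obtained from the same word by replacing each $A$ by $u$ and each $B$ by $v$. As noted in Definition \ref{leafoperator}, $W_I$ depends only on $I$.

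First I would define, for a tree $t$ with leaves indexed by $\cI_t$, the map
$$V_t:\fH_t\to L^2([0,1],\fH),\qquad V_t\Big(t,\oplus_{I\in\cI_t}\xi_I\Big)=\sum_{I\in\cI_t}\frac{\chi_I}{\sqrt{\Leb(I)}}\,W_I^*\,\xi_I .$$
As the $\chi_I$ have disjoint supports and each $W_I$ is unitary, $V_t$ is an isometry. The crucial point is compatibility with the connecting maps. Refining $t$ by one caret splits a leaf $I$ into its halves $I_0,I_1$; by Definition \ref{pythagoras} the component $\xi_I$ is sent to $A\xi_I$ at $I_0$ and to $B\xi_I$ at $I_1$, while $W_{I_0}=uW_I$ and $W_{I_1}=vW_I$. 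Using $\Leb(I_{0})=\Leb(I_1)=\Leb(I)/2$ and $W_{I_0}^*u=W_I^*$ one checks
$$\frac{\chi_{I_0}}{\sqrt{\Leb(I_0)}}W_{I_0}^*(A\xi_I)+\frac{\chi_{I_1}}{\sqrt{\Leb(I_1)}}W_{I_1}^*(B\xi_I)=\frac{\chi_I}{\sqrt{\Leb(I)}}W_I^*\xi_I ,$$
so $V_s\circ\Phi(f)=V_t$ whenever $s$ refines $t$. Hence the $V_t$ assemble into a single isometry $V:\scrH\to L^2([0,1],\fH)$. It is precisely the identity $W_{I_0}^*u=W_I^*$ that forces the twist to be $W_I^*$ rather than $W_I$. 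For surjectivity, the range of $V$ contains every $\chi_I\otimes\zeta$ (take $\xi_I=\sqrt{\Leb(I)}\,W_I\zeta$), and these span a dense subspace of $L^2([0,1],\fH)$; since an isometry has closed range, $V$ is unitary.

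It remains to compute $\sigma_g:=V\pi(g)V^*$. Write $g=\frac{t}{s}\in F$, let leaf $\ell$ correspond to $J_\ell\in\cI_s$ and $I_\ell\in\cI_t$, and let $g$ be the associated piecewise-linear homeomorphism sending $J_\ell$ affinely onto $I_\ell$. Since $\pi(g)$ carries $\frac{s}{\oplus_\ell\xi_\ell}$ to $\frac{t}{\oplus_\ell\xi_\ell}$, conjugating by $V$, solving for $\xi_\ell$, and extending by density gives, for $x\in I_\ell$,
$$\sigma_g(\xi)(x)=\sqrt{\tfrac{\Leb(J_\ell)}{\Leb(I_\ell)}}\;W_{I_\ell}^*W_{J_\ell}\,\xi(g^{-1}x)=g'(x)^{-1/2}\,U(g,x)\,\xi(g^{-1}x),$$
where $U(g,x):=W_{I_\ell}^*W_{J_\ell}$ for $x\in I_\ell$ and the derivative factor is the one attached to the affine piece through $g^{-1}x$ (this matches the normalization already seen in the scalar case). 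Each $U(g,x)$ is a product of $u,v,u^*,v^*$, hence a unitary of $\fH$, and $x\mapsto U(g,x)$ is a dyadic step function, so measurable. Because $\sigma=V\pi(\cdot)V^*$ is a genuine unitary representation, comparing the two sides of $\sigma_{gh}=\sigma_g\sigma_h$ yields the cocycle identity $U(gh,x)=U(g,x)U(h,g^{-1}x)$ for free; one need only note that $U(g,x)$ is independent of the tree pair chosen to represent $g$, which holds since $W_I$ depends only on $I$.

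The main obstacle is the correct choice of twist: compatibility with the connecting maps is what pins the intertwiner down, and only the twist $W_I^*$ (not $W_I$) makes the single-caret refinement identity close up, since one needs the newly created letter $u$ (resp.\ $v$) to cancel on the correct side. Once $V$ is constructed, reading off the cocycle is routine bookkeeping; the only genuine care is in matching the derivative normalization $g'(x)^{-1/2}$ and the $g$-versus-$g^{-1}$ convention in the point action.
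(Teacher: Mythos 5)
Your proof is correct and follows essentially the same route as the paper's: your intertwiner $V_t$ coincides with the paper's map $\cR_{\cI}(\xi)=\sum_I \cA_I^*(\xi_I)\chi_I/\Leb(I)$ (since $\cA_I^*/\Leb(I)=W_I^*/\sqrt{\Leb(I)}$), the single-caret compatibility check is the same computation, and your cocycle $U(g,x)=W_{gJ}^*W_J$ is exactly the paper's $\cA_{gJ}^*\cA_J/\sqrt{\Leb(gJ)\Leb(J)}$. No substantive differences.
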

\begin{proof}
We begin by identifying $\scrH$ with $L^2([0,1],\fH)$. The theorem will then follow simply by transporting the
action of $F$ on $\scrH$ to $L^2([0,1],\fH)$.

So let $t\in \mathfrak T$ be given with $n$ leaves, $\mathcal I=\{I\}$ be the corresponding standard dyadic partition so that each $I$ is
a leaf of $t$,  and let $\xi =\oplus_{I\in \mathcal I} \xi_I$ be an element of $\fH_t$.
We set $$\cR_{\cI}(\xi)=\sum_{I\in \cI} \frac{\cA_I^*(\xi_I)\chi_I}{\Leb(I)}$$ 
where $\chi_I$ is the characteristic function of $I$ and $\Leb$ the Lebesgue measure. 
(For the meaning of $\mathcal A_I$ see \ref{leafoperator}.) 

Since $\cA_I$ is $\sqrt{\Leb(I)}$ times a unitary operator, 
$$\int_0^1 ||\cR_{\cI}(\xi)(x)||^2 dx =\sum_{I\in \cI} \frac{\Leb(I)\times \Leb(I)}{\Leb(I)^2} ||\xi_I\chi_I||^2=||\xi||^2.$$
The next thing to show is that $\cR$ is compatible with the direct limit. 
Let $N$ be the $i$th interval of $\cI$ that we split in two as $N=K\cup L$ where $K$ is the first half and consider the refined partition $\cJ$ containing $K,L$ and all intervals of $\cI$ except $N$.
We need to show that 
\begin{equation}\label{equa:RIJ}\mathcal R_{\cJ}(\Phi(f_i)(\xi))=\cR_{\cI}(\xi).\end{equation}

But in 
$$\mathcal R_{\cJ}(\Phi(f_i)(\xi))=\sum_{M\in \mathcal J} \frac{\mathcal A_M^*(\Phi(f_i)(\xi)_M)\chi_M}{\Leb(M)}$$ 
all the terms are equal to those of $\cR_\cI(\xi)=\sum_{I\in \mathcal I} \frac{1}{\Leb(I)}\mathcal A_I^*(\xi_I)\chi_I$ except 
$$\frac{\mathcal A_K^*((\Phi(f_i)(\xi)_K)\chi_K}{\Leb(K)} \text{ and } \frac{\mathcal A_L^*((\Phi(f_i)(\xi)_L)\chi_L}{\Leb(L)}$$ 
which are 
$$\frac{\mathcal A_K^*(\frac{1}{\sqrt 2} u(\xi_{N}))\chi_K}{\Leb(K)} \text{ and } \frac{\mathcal A_L^*(\frac{1}{\sqrt 2} v(\xi_{N}))\chi_L}{\Leb(L)}$$ 
respectively.
However $\mathcal A_K=\frac{1}{\sqrt 2}u \cA_{N}$ and $\cA_L=\frac{1}{\sqrt 2}v \cA_{N}$ and $\Leb(K)=\Leb(L)=\frac{1}{2}\Leb(N)$ so that the sum of these two terms is $\frac{1}{\Leb(N)}\cA_{N}^*(\xi_N)  \chi_{N}$ implying \ref{equa:RIJ}.
 
Since any tree is a combination of some $f_i$ we obtain that the collection of $\cR_\cI$ define an isometry $\mathcal R$ from $\scrH$ to $L^2([0,1],\fH)$. 
It's image clearly contains the dense subspace spanned by functions of the form $\xi\chi_I$ for all $\xi\in \fH$ and standard dyadic intervals $I$. So $\mathcal R$ extends to a unitary.
  
To finish the proof we need to see how the action of $F$ on $\scrH$ transports under $\mathcal R$. So let $g=\frac{\cI}{\cJ} $ be  an element of $F$ and choose a vector $\xi\chi_J$ for some $J\in \cJ$. 
By definition, $\mathcal R^{-1}(\xi\chi_J)$ is the (equivalence class of the) element $\frac{\cJ}{\eta}$ where  $\eta\in \oplus _{M\in \cJ} \fH$ is zero except when $M=J$ for which it is $\cA_J(\xi)$. 
By construction $\pi(g)(\frac{\cJ}{\eta})=\frac{\cI}{\eta}$ where $g(J)=I\in \cI$. 
Hence $$\cR \pi(g) \cR^{-1}( {\xi \chi_J} ) = \frac{ \cA_{ g J }^* \cA_J( \xi )\chi_{ g  J  }  }{ \Leb(gJ) } .$$
So for $g=\frac{\cI}{\cJ} $ and $x\in J\subseteq [0,1]$ define the unitary $U(g,x)= \frac{\cA_{gJ}^*\cA_J}{\sqrt{\Leb(gJ) \Leb( J ) } } .$
Then
$$\mathcal R\pi(g)\mathcal R^{-1}({\xi \chi_J})=\frac{\sqrt{\Leb(J) } }{ \sqrt{ \Leb( gJ ) } }(U(g,x)\xi) \chi_{gJ}$$ and
$$(\mathcal R\pi(g)\mathcal R^{-1}({\xi}))(x)=g'(x)^{-1/2}(U(g,x)\xi)(g^{-1}x).$$                                                           
\end{proof}
 
The interesting issue is the continuity of the representation for some topology on $F$ or $T$ as a group of homeomorphisms.

\subsection{The free group}\mbox{ }

Let $a$ and $b$ freely generate the free group on two generators $\mathbb F_2$ and let $u_a$ and $u_b$ be
the corresponding unitaries of the left regular representation on $\ell^2(\mathbb F_2)$.
Then putting $A=\frac{1}{\sqrt 2}u_a$ and $B=\frac{1}{\sqrt 2}u_v$ we get a solution of the Pythagorean equation \ref{pythagoras} and thus a unitary representation $\pi$ of $F$. 
If we choose $\Omega$ to be the characteristic function of the identity of $\mathbb F_2$ then when we calculate the coefficient $\langle\pi(g)\Omega,\Omega\rangle$ the only paths on the pair of trees  which contribute to the sum are those in which the path on the top tree is exactly the reflection (in the horizontal like between the two trees)  of the bottom path. 
But this means that the intervals indexed by the top and bottom paths are the same, so the Thompson group element is the identity on that interval. 
Moreover, the factors of $\frac{1}{\sqrt 2}$ give precisely the Lebesgue measure $\Leb$ of that interval. 
We obtain the result:
\begin{proposition}
For any $g\in F$ we have the equality 
$$\langle\pi(g)\Omega,\Omega\rangle=\Leb( \{  \text{fixed points of } g \} ).$$
\end{proposition}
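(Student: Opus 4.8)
The plan is to evaluate the coefficient $\langle\pi(g)\Omega,\Omega\rangle$ directly using Formula \ref{formula}, exploiting the special structure of the free-group unitaries. Recall that for $g=\frac{t}{s}\in F$ the coefficient is $\sum_{\ell}\langle \cA_\ell^s\Omega,\cA_\ell^t\Omega\rangle$, where each $\cA_\ell^s$ is a word in $A=\frac{1}{\sqrt 2}u_a$ and $B=\frac{1}{\sqrt 2}u_b$ read off from the left/right turns of the path to the leaf $\ell$ in the bottom tree, and similarly $\cA_\ell^t$ from the top tree. The first step is to write each such operator as a scalar times a reduced word in $u_a,u_b$: if the path to $\ell$ in $s$ has length $k$, then $\cA_\ell^s = 2^{-k/2} w_\ell^s$ where $w_\ell^s$ is the corresponding product of $u_a,u_b$, which is a group element $g_\ell^s\in\mathbb F_2$ (no cancellation occurs, since we never multiply a generator by its inverse).

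With $\Omega=\delta_e$ the characteristic function of the identity, I would then compute $\langle \cA_\ell^s\Omega,\cA_\ell^t\Omega\rangle = 2^{-(|\ell^s|+|\ell^t|)/2}\langle u_{g_\ell^s}\delta_e, u_{g_\ell^t}\delta_e\rangle = 2^{-(|\ell^s|+|\ell^t|)/2}\langle \delta_{g_\ell^s},\delta_{g_\ell^t}\rangle$, where $|\ell^s|$ denotes the length of the path to $\ell$ in $s$. Since $\{\delta_h\}_{h\in\mathbb F_2}$ is an orthonormal basis, this inner product is $1$ if $g_\ell^s=g_\ell^t$ and $0$ otherwise. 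The key observation is that $g_\ell^s=g_\ell^t$ as elements of the free group forces the two paths to be \emph{identical} as sequences of left/right turns (since distinct sequences of $u_a,u_b$ give distinct reduced words), which in turn forces $|\ell^s|=|\ell^t|$ and hence the standard dyadic interval $I_\ell^s$ attached to $\ell$ by the bottom tree to equal the interval $I_\ell^t$ attached by the top tree.

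The final step is to interpret this geometrically in terms of the Thompson group element. The pair $\frac{t}{s}$ describes the homeomorphism $g$ by $g(I_\ell^s)=I_\ell^t$ for each leaf $\ell$, realized affinely. The leaf $\ell$ contributes $1$ to the sum exactly when $I_\ell^s=I_\ell^t=:I$; on such an interval $g$ maps $I$ to $I$ with slope $\Leb(I_\ell^t)/\Leb(I_\ell^s)=1$, so $g$ restricts to the identity on $I$ (an affine self-map of an interval with slope $1$ fixing the endpoints is the identity). The surviving factor is $2^{-|\ell|} = \Leb(I)$. Summing over the contributing leaves gives $\sum_{\ell:\, I_\ell^s=I_\ell^t}\Leb(I_\ell^t) = \Leb(\{\text{fixed points of }g\})$, since the fixed-point set of a piecewise-affine circle map of this type is, up to a finite set of breakpoints, exactly the union of the intervals on which $g$ is the identity.

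The main obstacle, and the point deserving care, is the claim that $g$ fixes a point \emph{only} on the intervals where $g_\ell^s=g_\ell^t$: one must rule out isolated fixed points lying strictly inside an interval on which $g$ has slope $\neq 1$, and check that the breakpoints (a finite set, hence Lebesgue-null) do not affect the measure. A clean way is to note that after refining both $t$ and $s$ to a common finer pair of trees, every interval $I$ on which $g$ is affine has $g|_I$ equal to the identity iff $g$ has slope $1$ \emph{and} fixes one point of $I$, and that these are precisely the intervals with $I_\ell^s=I_\ell^t$; any affine piece with slope $1$ but $I_\ell^s\neq I_\ell^t$ is a genuine translation contributing no interior fixed points, while slope-$\neq 1$ pieces contribute at most one fixed point each. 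Thus the measure of the fixed-point set equals the total length of the identity intervals, matching the computed coefficient.
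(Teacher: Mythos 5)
Your proof is correct and follows essentially the same route as the paper: orthonormality of the basis $\{\delta_h\}$ of $\ell^2(\mathbb F_2)$ together with freeness forces the top and bottom paths to a contributing leaf to coincide, hence the corresponding dyadic intervals coincide and $g$ is the identity there, while the $\frac{1}{\sqrt 2}$ factors produce exactly $\Leb(I)$. Your extra care in checking that isolated fixed points and breakpoints contribute nothing to the measure is a welcome refinement of the paper's terser argument, but not a different method.
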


This positive definite function for arbitrary nonfree \emph{measure-preserving} actions of groups was investigated by Vershik in \cite{Vershik11-NonFree}.

Note that rotations have no fixed points and the images of $\Omega$ by two different rotations are orthogonal so that the rotation group by dyadic rationals acts as its regular representation on the span of the orbit of $\Omega$.

Consider the group $F$ with the metric $d(g,h):=\Leb(\{ x\in [0,1] \ : g(x)\neq h(x) \} )$.
This turns $F$ into a topological group.
The proposition of above shows that $\langle\pi(g)\Omega,\Omega\rangle=1-d(g,e)$ for any $g\in F$.
This implies that the cyclic unitary representation $\rho^\Omega_{A,B}$ of $F$ is continuous (for the strong operator topology).

\subsection{Canonical anticommutation relations: $B=A^*$}\label{sec:CAR}\mbox{ }

The operator of creation of a single fermion is the $2\times 2$ matrix $a=\left( \begin{matrix} 0&1\\ 0&0 \end{matrix} \right)$ which satisfies the canonical anticommutation relations (CAR) $aa^* + a^*a=1$ and $a^2=0$. 
So we may construct a Pythagorean representation with $\fH=\C^2$ and $A=a, B=a^*$.
If we let $\Omega$ be the vector $(0,1)\in \fH$ we see from the formula \ref{formula} and $a^2=(a^*)^2=0$ that $\langle \pi(g)\Omega,\Omega\rangle=0$ unless the pair of trees for $g$ has a ``zigzag'', i.e.~a path that goes from bottom to top starting to the left  then alternating turning zero and right. 
In this case $\langle \pi(g)\Omega,\Omega\rangle=1$. 
Thus the set $F_\Omega$ of such pairs of trees defines the subgroup of $F$ (and $T$) that fixes $\Omega$ and the Pythagorean representation is induced from it.

\begin{proposition}\label{prop:1/3} 
The group $F_\Omega<F$ just defined is the stabiliser of $\frac{1}{3}$ for the action of $F$ on $[0,1]$ and the cyclic representation $\rho$ generated by $\Omega$ is unitary equivalent to the quasi-regular representation $\lambda_{F/F_\Omega}:F\act\ell^2(F/F_\Omega)$.
\end{proposition}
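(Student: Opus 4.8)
The plan is to prove the two assertions separately. First I would identify $F_\Omega$ with the stabiliser of $1/3$. Recall from the discussion preceding the proposition that $\langle\pi(g)\Omega,\Omega\rangle=1$ precisely when the pair of trees for $g$ admits a ``zigzag'' path from bottom to top, and is $0$ otherwise; consequently $F_\Omega=\{g\in F:\pi(g)\Omega=\Omega\}$ is genuinely a subgroup. The key observation is that the zigzag path $(0,1,0,1,\dots)$ (left, right, left, right, $\dots$) is exactly the infinite binary expansion of $1/3=0.010101\dots_2$. So a leaf $\ell$ of a tree $t$ lies ``along the zigzag'' iff the corresponding standard dyadic interval $I_\ell$ contains $1/3$ in the appropriate sense, i.e.~$1/3$ lies on the boundary between the two edges at $\ell$ or $I_\ell$ is the dyadic interval whose address is an initial segment of the zigzag. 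I would make this precise by noting that under the identification of leaves with standard dyadic intervals from Definition~\ref{leafoperator}, the operator $\cA_\ell^t\Omega$ is nonzero only when the address of $\ell$ is an initial segment of $010101\dots$, and in that case $\cA_\ell^t\Omega=\pm\Omega'$ for a fixed unit vector. Then $\pi(g)\Omega=\Omega$ forces the top and bottom trees to agree along the zigzag, which is exactly the condition that the piecewise-linear map $g$ fixes $1/3$.

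Concretely, I would argue as follows. An element $g=\frac{t}{s}\in F$ fixes $1/3$ iff $g$ maps the dyadic interval containing $1/3$ in the partition $\cI_s$ to the corresponding one in $\cI_t$ compatibly with the binary expansion of $1/3$, and this is equivalent to saying the branch of $s$ leading toward $1/3$ and the branch of $t$ leading toward $1/3$ have the same address-namely an initial segment of the zigzag sequence. This matches the zigzag condition characterising $F_\Omega$. I would spell out the forward and reverse inclusions: if $g$ fixes $1/3$ then the zigzag paths in $s$ and $t$ line up and contribute $1$ to the coefficient, so $g\in F_\Omega$; conversely if $g\in F_\Omega$ the zigzag alignment forces $g(1/3)=1/3$.

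For the second assertion, I would invoke the general induced-representation principle stated just before Section~\ref{sec:AoneBzero}: since $\Omega$ is a cyclic vector for $\rho$ (by definition $\rho$ is spanned by the $F$-orbit of $\Omega$) and $|\langle\rho(g)\Omega,\Omega\rangle|\in\{0,1\}$ for all $g\in F$, the representation $\rho$ is induced from the character $\chi$ of the subgroup $H=\{h:\langle\rho(h)\Omega,\Omega\rangle=\chi(h)\}$. Here $H=F_\Omega$ and the character is trivial, since $\langle\pi(h)\Omega,\Omega\rangle=1$ for $h\in F_\Omega$ as computed above. The representation induced from the trivial character of a subgroup is precisely the quasi-regular representation $\lambda_{F/F_\Omega}$ on $\ell^2(F/F_\Omega)$, which gives the claimed unitary equivalence.

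The main obstacle I anticipate is the careful bookkeeping in the first step: translating the combinatorial ``zigzag'' condition on pairs of trees into the geometric statement that $g$ fixes $1/3$, while correctly handling the boundary behaviour of the dyadic expansion of $1/3$ (which is not a dyadic rational, so it lies strictly inside a nested sequence of dyadic intervals rather than on any finite partition point). I would need to verify that the address of the leaf interval containing $1/3$ in any standard dyadic partition is indeed the relevant initial segment of $0101\dots$, and that $a^2=(a^*)^2=0$ correctly kills every non-zigzag path so that only the single surviving path contributes. Once this identification is clean, the induced-representation conclusion is immediate from the cited general principle.
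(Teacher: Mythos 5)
Your overall strategy is the same as the paper's: identify the zigzag intervals with the standard dyadic intervals containing $\tfrac13$, then apply the induced-representation principle to the $\{0,1\}$-valued vector state; the second half of your argument is correct and is exactly what the paper does. The gap is in the first half. It is not true that $\cA_\ell^t\Omega=\pm\Omega'$ for a fixed unit vector $\Omega'$ whenever the address of $\ell$ is an initial segment of $0101\cdots$: with $A=a$, $B=a^*$ and $\Omega=(0,1)$ one computes $a\Omega=(1,0)$, $a^*a\Omega=(0,1)$, $aa^*a\Omega=(1,0)$, and so on, so the value alternates between two \emph{orthogonal} unit vectors according to the parity of the depth of $\ell$. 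Hence a common leaf contributes $1$ to the coefficient only if its addresses in $s$ and in $t$ are both initial segments of the zigzag \emph{and} the two depths have the same parity. This parity condition (the ``parity considerations'' in the paper's proof) is precisely what makes $F_\Omega$ equal to the stabiliser of $\tfrac13$ rather than to the larger, non-group set of elements carrying some standard dyadic interval containing $\tfrac13$ onto another. Concretely, the element of $F$ with bottom partition $\{[0,\tfrac14],[\tfrac14,\tfrac12],[\tfrac12,\tfrac34],[\tfrac34,1]\}$ and top partition $\{[0,\tfrac14],[\tfrac14,\tfrac38],[\tfrac38,\tfrac12],[\tfrac12,1]\}$ sends the zigzag interval $[\tfrac14,\tfrac12]$ (address $01$) affinely onto the zigzag interval $[\tfrac14,\tfrac38]$ (address $010$), yet it sends $\tfrac13$ to $\tfrac{7}{24}$ and its coefficient is $\langle a^*a\Omega,\,aa^*a\Omega\rangle=0$. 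For the same reason your stated equivalence ``$g$ fixes $\tfrac13$ iff the two branches have the same address'' fails in both readings: literally it is too strong (an element can fix $\tfrac13$ while its two zigzag leaves have different depths, e.g.\ $[\tfrac14,\tfrac12]\to[\tfrac{5}{16},\tfrac38]$), and weakened to ``both addresses are initial segments of the zigzag'' it is too weak by the example above.

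The converse direction also needs an actual argument: knowing that $g$ maps the depth-$d_1$ zigzag interval onto the depth-$d_2$ one with $d_1\equiv d_2\pmod 2$, you must still deduce $g(\tfrac13)=\tfrac13$. The paper does this by iterating $g$ or $g^{-1}$; alternatively, a direct computation works, since the left endpoint $a_d$ of the depth-$d$ zigzag interval satisfies $\tfrac13-a_d=c_d/(3\cdot 2^d)$ with $c_d\in\{1,2\}$ determined by the parity of $d$, whence $g(\tfrac13)-\tfrac13=(c_{d_1}-c_{d_2})/(3\cdot 2^{d_2})$ vanishes exactly when the parities agree. Once these points are repaired, your appeal to the cyclic vector with $\{0,1\}$-valued coefficient and the trivial character to conclude $\rho\simeq\lambda_{F/F_\Omega}$ is exactly the paper's final step.
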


\begin{proof} 
The set of intervals indexed by the vertices of 
a zigzag path on a single tree is precisely the set of standard dyadic intervals containing
$\frac{1}{3}$. Thus any element of $F$ stabilising $\frac{1}{3}$ must
take a zigzag interval into another. Parity considerations show that 
the two individual zigzags combine to form a full zigzag. Conversely if a pair of trees contains a zigzag then the element of $F$ maps a
 standard dyadic interval containing $\frac{1}{3}$ to another. If these intervals are the same it fixes $\frac{1}{3}$. If not, iterating either the transformation or its inverse produces shows that the transformation fixes $\frac{1}{3}$.
Since the vector state $g\mapsto \langle \pi(g)\Omega,\Omega\rangle$ is equal to the characteristic function of $F_\Omega$ we obtain that $\rho$ and $\lambda_{F/F_\Omega}$ are unitary equivalent.
\end{proof}
Following the same idea we can produce the quasi-regular representation $\la_{F/F_x}$ where $F_x$ is the stabiliser of any real $x\in [0,1]$ for the usual action $F\act [0,1]$.
Indeed, fix $x\in [0,1]$ and a ray in the infinite rooted binary tree (whose vertices are identified with standard dyadic intervals in the usual way) starting at the root such that $x$ is in each interval appearing in this path. 
Put $L\subset \N$ the set of $k$ such that the $k$th edge of the ray turns to the left and set similarly $R$ for the right turns.
Define the following operators $A,B\in B(\N)$ such as
\begin{align*}
A:&\delta_k\mapsto \delta_{k+1} \text{ if } k\in L \text{ and } 0 \text{ otherwise;}\\
B:&\delta_k\mapsto \delta_{k+1} \text{ if } k\in R \text{ and } 0 \text{ otherwise,}\\
\end{align*}
where $(\delta_n,n\in\N)$ is the canonical basis of $\N$.
They form a Pythagorean couple and we obtain that the cyclic component generated by $\delta_0$ is unitary equivalent to the quasi-regular representation $\la_{F/F_x}.$
We invite the reader to consult \cite{Golan-Sapir18} for results concerning the algebraic structure of this type of subgroups.

\subsection{$A$ and $B$ projections}\mbox{ }

Consider a Hilbert space $\fH$, a unit vector $\Omega\in\fH$, and two projections $A,B\in B(\fH)$ satisfying the Pythagorean identity.
Necessarily $A$ and $B$ commute and $B=\id-A$.
If $t$ is a tree, then the first (resp. last) component of $\Phi(t)\Omega$ is equal to $A\Omega$ (resp. $B(\Omega)$) and all the others are equal to zero.
Therefore $\langle\pi(g)\Omega,\Omega\rangle =1$ if $g\in F$ and zero otherwise meaning that $\rho$, as a representation of $T$, is induced from the trivial representation of $F$. 

\subsection{Actions for $F_3$: $\fH=\C$}\mbox{ }

Set $\fH=\C, A_1=1/\sqrt 2, A_2=0, A_3=1/\sqrt 2,$ and consider the associated Pythagorean representation of the Thompson's group $F_3$.
Let $\fC\subset[0,1]$ be the Cantor ternary set and write $F_\fC<F_3$ the stabiliser subgroup $F_\fC:=\{g\in F_3 : gc=c, \forall c\in\fC\}$.

\begin{proposition}\label{prop:cantor}
The stabiliser $F_\Omega$ of the vacuum vector $\Omega=1$ is equal to $F_\fC$.
\end{proposition}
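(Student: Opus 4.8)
I want to show that the stabilizer $F_\Omega$ of the vacuum vector $\Omega = 1$ in the Pythagorean representation of $F_3$ associated to $A_1 = 1/\sqrt{2},\ A_2 = 0,\ A_3 = 1/\sqrt{2}$ coincides with the subgroup $F_{\fC}$ fixing the Cantor ternary set $\fC$ pointwise. The plan is to follow the strategy used in the CAR case (Proposition \ref{prop:1/3}) and the projection case: I will compute the vacuum coefficient $\langle \pi(g)\Omega, \Omega\rangle$ explicitly using Formula \ref{formula}, show it takes only the values $0$ and $1$, and identify the set of $g$ giving the value $1$ with $F_{\fC}$.

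**Computing the coefficient combinatorially.** Since $\fH = \C$ and $A_2 = 0$, in the ternary analogue of Formula \ref{formula} the leaf operators $\cA_\ell$ are products of the scalars $A_1, A_2, A_3$ according to whether each branching turns left, middle, or right. Any leaf whose path uses a \emph{middle} edge contributes $A_2 = 0$ and is killed, so only leaves indexed by paths that turn left or right at every stage survive; each such surviving path contributes a factor $(1/\sqrt{2})^k$ where $k$ is its depth. First I would observe, exactly as in the projection case, that $\langle\pi(g)\Omega,\Omega\rangle$ is a sum over leaves $\ell$ of the pair of trees for $g = \frac{t}{s}$ of the products $\overline{\cA^t_\ell}\,\cA^s_\ell$, and that a nonzero contribution requires both the top and bottom paths to $\ell$ to avoid the middle edge entirely. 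The key geometric fact is that standard triadic intervals whose addresses avoid the digit $2$ (equivalently, never take the middle branch) are precisely the triadic intervals meeting $\fC$. So the surviving leaves correspond to triadic intervals containing points of $\fC$.

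**Identifying the value-$1$ set with $F_{\fC}$.** The heart of the argument is the analogue of the parity/zigzag analysis in Proposition \ref{prop:1/3}. I would argue that the factors of $1/\sqrt{2}$ assemble into the Lebesgue measure, or more precisely a renormalized triadic counting, so that when $g$ genuinely fixes every point of $\fC$ the contributions from matched top-and-bottom Cantor-paths sum to exactly $1$, while any dislocation between the addresses produces either cancellation or a strictly smaller modulus, forcing the coefficient to be $0$ whenever $g \notin F_{\fC}$. Concretely, $g \in F_{\fC}$ exactly when $g$ maps each triadic interval meeting $\fC$ to another such interval with the \emph{same} induced action on $\fC$, which is what matches the top path to the bottom path leaf-by-leaf. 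Thus $\langle\pi(g)\Omega,\Omega\rangle = 1$ iff $g \in F_{\fC}$ and $=0$ otherwise, and since the coefficient is the characteristic function of $F_\Omega$ (as $\Omega$ is cyclic for $\rho$ and the coefficient is $\{0,1\}$-valued), we conclude $F_\Omega = F_{\fC}$.

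**Main obstacle.** The delicate point is not the reduction to Cantor-paths — that follows cleanly from $A_2 = 0$ — but rather showing rigorously that \emph{every} $g$ fixing $\fC$ setwise in the relevant sense actually gives coefficient exactly $1$, and conversely that no $g$ outside $F_{\fC}$ can accidentally produce $1$. I expect the work to lie in verifying that fixing $\fC$ pointwise is equivalent to the combinatorial condition that $g$'s pair of trees admits a perfect matching of all surviving (digit-$2$-avoiding) leaves between top and bottom, including a careful treatment of the triadic endpoints of intervals adjacent to $\fC$, mirroring the "parity considerations" step of Proposition \ref{prop:1/3}. Once that equivalence is established, the normalization ensuring the matched sum equals $1$ is a routine telescoping of the $1/\sqrt{2}$ factors.
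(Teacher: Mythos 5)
Your reduction via $A_2=0$ to the leaves whose ternary addresses avoid the middle digit, and the identification of those leaves with the standard triadic intervals meeting $\fC$, is exactly the right starting point and matches the paper. But the pivot of your argument --- that $\langle\pi(g)\Omega,\Omega\rangle$ is $\{0,1\}$-valued and equals $0$ off $F_\fC$ --- is false. All surviving contributions $\overline{\cA^t_\ell}\,\cA^s_\ell=2^{-(d(t,\ell)+d(s,\ell))/2}$ are strictly positive, so there is no ``cancellation'' available, and the sum is generically strictly between $0$ and $1$. Concretely, take $\cI=\{[0,\tfrac19],[\tfrac19,\tfrac29],[\tfrac29,\tfrac13],[\tfrac13,\tfrac23],[\tfrac23,1]\}$ and $\cJ=\{[0,\tfrac13],[\tfrac13,\tfrac23],[\tfrac23,\tfrac79],[\tfrac79,\tfrac89],[\tfrac89,1]\}$: the element $g=\frac{\cJ}{\cI}$ sends $\tfrac19\in\fC$ to $\tfrac13$, so $g\notin F_\fC$, yet only the first and last leaves survive and each contributes $2^{-3/2}$, giving $\langle\pi(g)\Omega,\Omega\rangle=2^{-1/2}$. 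So this representation is \emph{not} of the induced/quasi-regular type of Proposition \ref{prop:1/3}, and the final step of your argument (``the coefficient is the characteristic function of $F_\Omega$'') collapses.

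The coefficient route can be repaired, but only by replacing your claim with Cauchy--Schwarz: since $\Vert\pi(g)\Omega\Vert=\Vert\Omega\Vert=1$, one has $\langle\pi(g)\Omega,\Omega\rangle=1$ if and only if $\Phi(\cI)\Omega=\Phi(\cJ)\Omega$ as vectors, which is precisely the condition the paper analyses directly (its $I$th component is $\sqrt{2}^{\,\log_3\Leb(I)}$ on no-middle-turn intervals and $0$ otherwise). At that point the entire content of the proposition is the combinatorial equivalence between this vector equality and fixing $\fC$ pointwise --- i.e.\ that both amount to the collections of middle-turn-free intervals of $\cI$ and $\cJ$ coinciding, with $g$ acting as the identity on each. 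That is exactly the step you defer to your ``main obstacle'' paragraph without carrying it out, so as written the proposal contains both a false intermediate claim and a gap at the decisive step.
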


\begin{proof}
In this proof we identify tryadic trees with standard tryadic partitions and leaves with standard tryadic (closed) intervals.
Consider a standard tryadic interval $I\subset [0,1]$ and observe that its interior intersects $\fC$ if and only if the path from the root to $I$ in the infinite rooted ternary tree has only left and right turns but no no-turns. 
In that situation, we say that $I$ "has no no-turns."
Note that if $I$ has at least one no-turn, then it intersects $\fC$ only at its extremities.
In particular, if $\cI$ is a standard tryadic partition of $[0,1]$, then $\fC$ is included in the union of the $I\in\cI$ such that $I$ doesn't have any no-turns.
Therefore, if $g=\frac{\cJ}{\cI}\in F_3$ with $\cI,\cJ$ standard tryadic partitions of $[0,1],$ then $g$ fixes each point of $\fC$ if and only if $g$ is the identity when restricted to an interval $I\in\cI$ without no-turns.
This means that the collections of intervals of $\cI$ and of $\cJ$ that have no no-turns are equals.
Observe that 
$$(\Phi(\cI)\Omega)_I = \begin{cases}
\sqrt 2^{\log_3(\Leb(I))} \text{ if $I$ has no no-turns }\\
0 \text{ otherwise }  
\end{cases}.$$
An easy induction on the number of intervals in $\cI$ together with this description of $\Phi(\cI)\Omega$ implies that $\Phi(\cI)\Omega=\Phi(\cJ)\Omega$ if and only if the collections of intervals of $\cI$ and of $\cJ$ that have no no-turns are equals implying $F_\Omega = F_\fC$.
\end{proof}
\subsection{$A_i$ all multiples of mutually noncommuting projections}\label{sec:projNC}\mbox{  }

Consider $\fH=\C^2$, the vector $\xi_1=(1,0)$ and its images under the rotations of angle $2\pi/3$ and $4\pi/3$: $\xi_2=(1/2,-\sqrt 3/2)$ and $\xi_3=(1/2,\sqrt 3/2).$
Consider $p_i$ the minimal orthogonal projection onto $\C\xi_i$ and observe that $p_1+p_2+p_3=3/2$ and that $p_1,p_2,p_3$ do not mutually commute.
We set $A_i:=\sqrt{2/3} p_i$ for $i=1,2,3$ which is a Pythagorean triple with associated functor $\Phi$ and unitary representation $(\pi,\scrH).$
Since the $p_i$ are minimal projections we have some nice reduction formulae for any words of them.
A routine computation gives us 
$$ p_i p_j p_i = \frac{1}{4} p_i \text{ and } p_i p_j p_k p_i = -\frac{1}{8} p_i \text{ for any } i\neq j\neq k\neq i.$$
Consider a tree $t$ and write $d(t,\ell)$ the distance from its $\ell$th leaf to its root.
We have the following formula for the $\ell$th component of the operator $\Phi(t)$:
$$(\Phi(t))_\ell = \sqrt{2/3}^{ d( t , \ell ) } \frac{ \varep }{ 2^a } p_i p_j p_k \text{ for some } \varep\in\{-1,+1\},\ a\geq 0, \ i,j,k\in \{1,2,3\}.$$
Consider the vacuum vector $\Omega:=\xi_1$ and assume that $\pi(g)\Omega=\Omega$ for a fraction $g=\frac{t}{s}$.
Since $\langle \pi(g)\Omega , \Omega\rangle = \langle \Phi(s)\Omega , \Phi(t)\Omega\rangle$ we have that $\Phi(t)\Omega=\Phi(s)\Omega$.
We have $\Omega=p_1\Omega$ and thus $p_1 (\Phi(t)\Omega)_\ell  = \sqrt{2/3}^{d(t,\ell)} \frac{\epsilon}{2^b}\Omega$ for some $\epsilon\in\{-1,+1\},\ b\geq 0.$
We obtain that $d(t,\ell)=d(s,\ell)$ for any leaf $\ell$ and thus $t=s$ implying that $g=e$ is trivial.

Not all vectors of $\C^2$ have a trivial stabiliser.
Consider the vector $\Omega=(0,1)$ that is orthogonal to $\xi_1$ and thus $A_1\Omega=0$. 
Let $t',s'$ be two distinct trees with the same number of leaves.
Construct the tree $t$ that is the composition of the unique ternary tree with three leaves to which we attach $t'$ on its first leaf and construct similarly $s$.
The fraction $g=\frac{t}{s}$ is a nontrivial element of $F_3$ that fixes the vector $\Omega$.

\subsection{Connes-Landi spheres}\label{sec:Connes-Landi}\mbox{  }

Connes and Landi introduced a family of noncommutative spheres which produces tuples of operators satisfying the Pythagorean equation \cite{Connes-Landi01}.
We gratefully thank Georges Skandalis for pointing out this very interesting class of examples.
Consider a $n\times n$  skew-symmetric matrix $\theta=(\theta_{kl})_{kl}$ and put $\rho_{kl}:=e^{ 2\pi i \theta_{kl} } $ for any $1\leq k,l\leq n$.
The noncommutative $(2n-1)$-dimensional sphere twisted by $\theta$ is the universal C*-algebra $C(S_\theta^{2n-1})$ generated by $n$ normal elements $A_1,\cdots,A_n$ satisfying the Pythagorean equality $\sum_{k=1}^n A_k^*A_k=\id$ and such that $A_kA_l=\rho_{kl} A_lA_k$ for any $1\leq k,l\leq n$.

We consider an easy case with $n=2$ and when $(A,B)$ satisfies the relation of the noncommutative torus as follows:
let $\fH=L^2(\bfS)$ be the Hilbert space of $L^2$-function on the circle and consider the operators $Af(z)=\frac{zf(z)}{\sqrt 2}, Bf(z)=\frac{f( \la z)}{\sqrt 2}$ for a fixed complex number $\la$ of modulus one and for any $f\in L^2(\bfS)$ and almost every $z\in \bfS$ \cite{Connes80-Calg,Rieffel81-NCT}.
Using the Fourier transform we obtain the following description: $\fH=\ell^2(\Z), A\delta_n=2^{-1/2} \delta_{n+1}, B\delta_n=2^{-1/2} \la^n\delta_n, \forall n\in\Z$.

Put $\Omega:=\delta_0$ and consider a tree $t$ with $n$ leaves.
Denote by $d(t,j)$ the distance between the root of $t$ and its $j$th leaf and $L(t,j)$ the number of left turns from the root to the $j$th leaf in $t$.
Observe that the $j$th component of the vector $\Phi(t)$ is equal to the following:
$$(\Phi(t)\Omega)_j=\frac{\la^{R(t,j)}}{2^{d(t,j)/2}}\delta_{L(t,j)},$$
where $R(t,j)$ is a natural number depending on $t$ and $j$.
This gives us a formula for computing coefficients.
If $s,t$ are trees with $n$ leaves and $g=\frac{t}{s}\in F$, then 
$$\langle \pi(g)\Omega,\Omega\rangle=\langle \Phi(s)\Omega , \Phi(t)\Omega\rangle = \sum_{j=1}^n \frac{\la^{R(s,j)-R(t,j)}}{2^{(d(s,j)+d(t,j))/2}} \langle\delta_{L(s,j)} , \delta_{L(t,j)}\rangle.$$
Note that if $g$ stabilises $\Omega$, then necessarily $d(t,j)=d(s,j)$ for any $j$ implying that $t=s$ and thus $g=e$.\\
{\bf Limit of the rotations.}
Since we are considering a noncommutative version of the torus we could hope that the rotation is continuous.
Unfortunately this is not the case.
Consider the full binary tree $t_n$ with $2^n$ leaves and $r_n\in T$ the one click rotation corresponding to the rotation by angle $2^{-n}$.
Consider the vacuum vector $\Omega=\delta_0$ and observe that 
\begin{align*}
| \langle \pi(r_n)\Omega,\Omega\rangle | & \leq  \sum_{j=1}^{2^n} | \langle (\Phi(t_n)\delta_0)_j , (\Phi(t_n)\delta_0)_{j+1}\rangle | \\
& = \frac{ | \{ 1\leq j \leq 2^n : L(t_n,j)=L(t_n,j+1) \} | }{2^n}.
\end{align*}
Note that if $\lambda=1$, then the previous inequality is in fact an equality.
The right hand side is equal to the probability of two consecutive words for the lexicographic order of length $n$ in an alphabet of two letters $a,b$ to have the same number of occurrences of the letter $a$.
Observe that two consecutive words of length $n$ have the same number of occurrences of $a$ if and only if they are as follows:
$x_1\cdots x_{n-2} ab$ and $x_1\cdots x_{n-2} ba$ with $x_1,\cdots,x_{n-2}\in\{a,b\}.$
This implies that $|\langle \pi(r_n)\Omega,\Omega\rangle|\leq 1/4$ for any $n$.
Therefore, the sequence of rotation of $(\pi(r_n))_n$ does not tend to the identity even for the weak operator topology.

When $\lambda=1$, then $B=\frac{1}{\sqrt 2}$ and thus $A^*B=A^*$ commutes with $A,B,A^*,B^*$.
The analysis of Section \ref{sec:rotation} implies that $\pi(r_n)$ tends for the weak operator topology to $[x]$ where $x=A^*B(1-B^*A)^{-1} = \frac{S^*}{2} + \frac{1}{4} + \sum_{m=1}^\infty \frac{S^m}{2^{m+2}}$ where $S$ is the shift unitary operator.
We find that $\lim_n\langle \pi(r_n)\Omega,\Omega\rangle = \frac{1}{4}$ for $\Omega=\delta_0$ which corroborates our previous computation.

\section{Cuntz algebra}
The Cuntz algebra $O_n$ is the universal C*-algebra with generators $S_1,\cdots, S_n$ subject to the relations: $\sum_{i=1}^n S_iS_i^*= \id$ and $S_i^* S_j=\delta_{i,j} \id$ for any $1\leq i,j\leq n$ where $\delta_{i,j}$ is the Kronecker delta, see \cite{Cuntz77}. 
In particular, $O_n$ is a quotient of the C*-algebra $P_n$ of Definition \ref{def:P_n} and thus any representation of $O_n$ defines a representation of $P_n.$

The next proposition shows that in fact any representation of $P_n$ can be dilated to a representation of $O_n$ and conversely a compression of a representation of $O_n$ by a well-behaved projection provides a representation of $P_n$.
We are grateful to Anna Marie Bohmann and Ruy Exel for enlightening discussions which led to this result, in particular the dilation
construction is an adaptation of a category theoretic construction of Bohmann.

\begin{proposition}\label{prop:Cuntz}
If $\alpha: P_n\to B(\fH)$ is a representation, then there exists a larger Hilbert space $\scrH\supset \fH$ and a representation $\beta: O_n\to B(\scrH)$ such that $p \beta(S_i) p = p \beta(S_i) = \alpha(A_i)^*$ for any $1\leq i \leq n$, where $p$ is the orthogonal projection onto $\fH.$

Conversely, if $\beta: O_n\to B(\scrH)$ is a representation and $p\in B(\scrH)$ a projection satisfying $p \beta(S_i) p = p \beta(S_i)$ for any $1\leq i \leq n$, then the map $A_i\mapsto p \beta(S_i)^*p$ extends to a representation of $P_n$ on $p\scrH.$
\end{proposition}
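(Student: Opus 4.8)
The plan is to treat the two directions separately, since the converse is a short computation and the forward (dilation) direction is the substantive one. Throughout I write $A_i := \alpha(A_i)$ and $S_i := \beta(S_i)$ to lighten notation.

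For the converse, suppose $\beta : O_n \to B(\scrH)$ is given with a projection $p$ satisfying $p\beta(S_i)p = p\beta(S_i)$, equivalently $\beta(S_i)^*p = p\beta(S_i)^*p$, i.e.\ $\beta(S_i)^*$ maps $p\scrH$ into itself. Setting $A_i := p\beta(S_i)^*p \in B(p\scrH)$, I would simply verify the Pythagorean relation on $p\scrH$. Using $p\beta(S_i)^* = p\beta(S_i)^*p$ we get $A_i^* = p\beta(S_i)p = p\beta(S_i)$, and therefore
\begin{align*}
\sum_{i=1}^n A_i^* A_i &= \sum_{i=1}^n p\beta(S_i)\, p\beta(S_i)^*p = \sum_{i=1}^n p\beta(S_i)\beta(S_i)^*p = p\Big(\sum_{i=1}^n S_iS_i^*\Big)p = p,
\end{align*}
which is the identity of $B(p\scrH)$. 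By the universal property of $P_n$ the assignment $A_i \mapsto A_i$ extends to a $*$-representation of $P_n$ on $p\scrH$. The one point requiring care is to extract $p\beta(S_i)^* = p\beta(S_i)^*p$ cleanly from the hypothesis $p\beta(S_i)p = p\beta(S_i)$ by taking adjoints.

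For the dilation direction, the key idea is that the defect operators of the $A_i^*$ should be absorbed into extra summands. Observe that since $\sum_i A_i^*A_i = \id$ on $\fH$, the row contraction $(A_1^*,\dots,A_n^*)$ fails to be a row isometry precisely by the amount $\id - \sum_i A_iA_i^*$, so the row $(A_1,\dots,A_n)$ is a \emph{column} isometry and our job is to enlarge the space to make the pieces into genuine Cuntz isometries. I would build $\scrH$ as a direct sum $\scrH := \bigoplus_{w} \fH_w$ indexed by finite words $w$ in the alphabet $\{1,\dots,n\}$ (including the empty word, whose summand is the original $\fH$), with each $\fH_w$ a copy of an auxiliary defect space, and then define $S_i$ to shift $\fH_w$ isometrically onto $\fH_{iw}$ on the new summands while on the distinguished copy $\fH = \fH_\emptyset$ it reproduces $A_i^*$ up to the defect correction. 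This is the ``Bohmann'' category-theoretic construction alluded to in the paragraph preceding the statement: one freely adjoins the missing range so that $\sum_i S_iS_i^* = \id$ holds on the enlarged space while $S_i^*S_j = \delta_{ij}$ is arranged by orthogonality of the word-indexed summands. The compression condition $p S_i p = pS_i = A_i^*$ should then hold by construction, with $p$ the projection onto $\fH_\emptyset$.

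The main obstacle — and where I expect to spend most of the effort — is arranging the Cuntz relations $S_i^*S_j = \delta_{ij}\id$ and $\sum_i S_iS_i^* = \id$ \emph{simultaneously} with the compression identity, rather than just one relation set, on an explicitly described space. Getting the isometries to be orthogonal (the $S_i^*S_j = \delta_{ij}$ relation) is automatic from disjoint word-indexing, but making $\sum_i S_iS_i^* = \id$ hold requires the word-shift structure to exactly exhaust $\scrH$, which forces the correct choice of defect space $\mathcal D$ (a copy of the range of $\id - \sum_i A_iA_i^*$, or more robustly $\mathcal D := \ell^2(\mathbf N)\otimes\fH$ to avoid dimension-matching issues) and a careful bookkeeping of how $\fH_\emptyset$ feeds into the shift. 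The cleanest route is probably to define $S_i$ on each summand by an explicit block formula, check $S_i^*S_j=\delta_{ij}\id$ and $\sum_i S_iS_i^*=\id$ by a direct computation on the dense span of the $\fH_w$, and finally read off $pS_ip = A_i^*$ from the $\fH_\emptyset$-block; the universal property of $O_n$ then yields $\beta$.
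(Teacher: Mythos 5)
Your converse argument is complete and correct (the paper dismisses it as obvious). The forward, dilation direction is the substance of the proposition, and there you have only a plan --- and the plan as written misidentifies the defect. Since $\sum_i \alpha(A_i)^*\alpha(A_i)=\id$, the row $T=(\alpha(A_1)^*,\dots,\alpha(A_n)^*):\fH^n\to\fH$ \emph{automatically} satisfies $\sum_i T_iT_i^*=\id$; the Cuntz relation that fails is $T_i^*T_j=\delta_{ij}\id$, whose defect is the operator matrix $\id_{\fH^n}-\bigl(\alpha(A_i)\alpha(A_j)^*\bigr)_{ij}=\id-VV^*$, where $V:\fH\to\fH^n$, $V\xi=(\alpha(A_1)\xi,\dots,\alpha(A_n)\xi)$, is the column isometry. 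This is a projection, so the correct defect space is $\mathcal D:=\fH^n\ominus V\fH$ --- not the range of $\id-\sum_i\alpha(A_i)\alpha(A_i)^*$, which need not even be a positive operator (take $\fH=\C^2$ with $A_1,A_2$ the matrix units $e_{11},e_{12}$: then $\sum_i A_i^*A_i=\id$ but $\sum_i A_iA_i^*=2e_{11}$); nor will the ``robust'' choice $\ell^2(\N)\otimes\fH$ do, since an oversized defect space destroys $\sum_i S_iS_i^*=\id$. With $\mathcal D=\fH^n\ominus V\fH$ your word-indexed construction does close up, and the verification you deferred reduces to one observation: the row $(S_i|_{\fH_\emptyset})_i$, viewed as the map $\fH^n\to\fH\oplus\mathcal D_\emptyset$ sending $\xi\mapsto V^*\xi\oplus(\id-VV^*)\xi$, is unitary because $\fH^n=V\fH\oplus\mathcal D$; this single fact gives both Cuntz relations together with the compression identity. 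As submitted, however, the hard half of the proposition is announced rather than proved.

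For comparison, the paper avoids all defect bookkeeping: it takes $\scrH$ to be the direct limit Hilbert space of the Pythagorean representation itself and defines $C_1(s+t,\xi\oplus\eta)=(s,\xi)$, $C_2(s+t,\xi\oplus\eta)=(t,\eta)$ by deleting the root caret; the identifications built into the direct limit make $C_iC_j^*=\delta_{ij}\id$ and $\sum_i C_i^*C_i=\id$ immediate, and $\beta(S_i):=C_i^*$ compresses to $\alpha(A_i)^*$ on $\fH=\fH_e$. That route is shorter, and it has the structural payoff that the dilation space is exactly the carrier of the Pythagorean representation, which the paper exploits in the discussion that follows. Your approach, once the defect space is corrected, is a legitimate alternative and is closer to the standard Popescu-type dilation of a row contraction, but it buys nothing extra here.
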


\begin{proof}
We prove it for $n=2$ but the argument generalises easily.
Consider a representation $\alpha: P_2\to B(\fH)$ defining a Pythagorean pair $(A_1,A_2).$
As usual, denote by $\scrH$ the inductive limit of the $\fH_t$ associated to $(A_1,A_2).$
If $s,t$ are trees, then we write $s+t$ for the tree obtained by attaching $s$ (resp. $t$) to the left (resp. right) leaf of the tree $f_{1,1}$ with two leaves.
If $(s,\xi)\in\fH_s, (t,\eta)\in\fH_t$ we form the vector $(s+t , \xi\oplus \eta)$ in $\fH_{s+t}$.
Define the operators
$$C_1:\scrH\to\scrH, (s+t , \xi\oplus \eta)\mapsto (s,\xi) \text{ and } C_2:\scrH\to\scrH, (s+t , \xi\oplus \eta)\mapsto (t,\eta).$$
They are well defined on the Hilbert space limit $\scrH$ and satisfy 
$$C_1^*(s,\xi) = (s+t,\xi\oplus 0) \text{ and } C_2^*(t,\eta) = (s+t,0\oplus \eta), \ \forall s,t\in\fT, \xi\in\fH_s,\eta\in\fH_t.$$
We obtain that 
$$ C_i C_j^* = \delta_{ i , j } \id \text{ and }  C_1^*C_1 + C_2^*C_2 = \id.$$
Therefore, the map $S_i\mapsto C_i^*$ extends to a representation $\beta:O_2\to B(\scrH).$
Moreover, one can check that $pC_ip = C_ip = A_i$ for $i=1,2.$

The converse is obvious.
\end{proof}

We deduce that any Pythagorean representation can be obtained via a representation of the Cuntz algebra as follows.

Consider a representation $\beta:O_n\to B(\scrH)$ giving a Pythagorean representation $(\pi_\beta,H_\beta)$ with functor $\Phi_\beta.$
The maps $\Phi_\beta(f_i)$ are invertible implying that the direct limit Hilbert space $H_\beta$ is naturally identified with the original Hilbert space $\scrH$ via the following unitary transformation
$$U_\beta:H_\beta\to \scrH, (t,\oplus_\ell\xi_\ell)\in\scrH_t\mapsto \sum_{\ell \text{ a leaf of } t} (\cA^t_\ell)^*\xi_\ell,$$
so there are universal representations $\sigma$ of $F_n$ and $T_n$ inside $O_n$ obtained by using $A_i=S_i^*$ in the formula \ref{pathoperator} and sending $\displaystyle \frac{t}{s} $ to $\sum_{\ell} \mathcal B^{t,s}_\ell\in O_n$.
This  embedding of Thompson's group $T_n$ inside the unitary group of  $O_n$ was discovered by Nekrashevych \cite{Nekrashevych04}.
Hence, for each representation $\beta:O_n\to B(\scrH)$ we have a representation of Thompson's group $T_n$ on $\scrH$ given by $\beta\circ\sigma.$
Note that we have the equality $$\beta\circ\sigma=\Ad(U_\beta)\circ \pi_\beta.$$
Consider a Pythagorean $n$-tuple $(A_1,\cdots, A_n)$ acting on $\fH$ and with representation $(\pi_A,\scrH)$ of $T_n.$
Let $\beta:O_n\to B(\scrH)$ be the representation constructed in the proof of Proposition \ref{prop:Cuntz} satisfying $p\beta(S_i)^*p=p\beta(S_i)^*=A_i$ where $p:\scrH\to \fH$ is the orthogonal projection onto $\fH.$
This provides a functor $\Phi_\beta$ and another unitary representation $(\pi_\beta,H_\beta)$ of $T_n$.
A careful check shows that $$\pi_A = \Ad(U_\beta)\circ\pi_\beta=\beta\circ\sigma.$$

\end{document}